\documentclass[11pt]{article}

\usepackage[a4paper,margin=1in]{geometry}
\usepackage{amsmath,amssymb,amsfonts,amsthm}
\usepackage{mathtools}
\usepackage{mathrsfs}
\usepackage{bbm}
\usepackage{enumerate}
\usepackage{hyperref}
\usepackage{tikz-cd}
\usepackage{ulem}
\usepackage{todonotes}
\usepackage{cancel}

\newtheorem{theorem}{Theorem}[section]
\newtheorem{lemma}[theorem]{Lemma}
\newtheorem{proposition}[theorem]{Proposition}
\newtheorem{corollary}[theorem]{Corollary}

\theoremstyle{definition}
\newtheorem{definition}[theorem]{Definition}
\newtheorem{remark}[theorem]{Remark}
\newtheorem{example}[theorem]{Example}

\numberwithin{equation}{section}

\newcommand{\R}{\mathbb{R}}
\newcommand{\X}{\mathbb{X}}
\newcommand{\Y}{\mathbb{Y}}
\newcommand{\N}{\mathbb{N}}
\newcommand{\E}{\mathbb{E}}
\newcommand{\F}{\mathcal{F}}
\newcommand{\M}{\mathcal{M}}
\newcommand{\B}{\mathcal{B}}
\newcommand{\1}{\mathbbm{1}}

\allowdisplaybreaks

\begin{document}

\title{Strong Measurability and Adapted Approximations with Applications to
Non-Markovian Control}

\author{Keivan Mirzaei\footnotemark[2] \and Jinniao Qiu\footnotemark[2]}
\date{}
\footnotetext[1]{This work was partially supported by the Natural Sciences and Engineering Research Council of Canada (NSERC). }
\footnotetext[2]{Department of Mathematics \& Statistics, University of Calgary, 2500 University Drive NW, Calgary, AB T2N 1N4, Canada. \textit{E-mail}: \texttt{keivan.mirzaei@ucalgary.ca} (K. Mirzaei), \texttt{jinniao.qiu@ucalgary.ca} (J. Qiu).}

\maketitle

\begin{abstract}
  We study strong measurability and adapted approximations for maps taking values in possibly nonseparable 
  metric spaces.
   After recalling the metric-space Pettis
  criterion, we give conditions under which measurability implies strong
  measurability. Under the continuum hypothesis (CH), every measurable map
  from a countably generated measurable space into a metric space is strongly
  measurable for every measure on the domain. For finite or $\sigma$-finite
  measures, the same conclusion holds when the target density is strictly
  smaller than every real-valued measurable cardinal. Under CH this includes
  targets of density at most $\mathfrak c$; if no real-valued measurable
  cardinal exists, it holds for all metric targets.

  For function-valued maps, we show that pointwise measurability and 
  continuity of the sections
need not ensure measurability in the function-space topology.
 We provide a verification
  criterion using countably many evaluations and essential separability of
  the range. Further, we obtain elementary $L^p$-approximations of
  progressive or jointly measurable adapted processes under suitable
  filtration assumptions, Lipschitz functionals with path and Wasserstein
  variables, and smooth approximations based on finitely many observations
  of a process with left- or right-continuous paths. 
  Finally, under CH and suitable assumptions, 
  we apply these results to controlled stochastic differential equations in separable Hilbert spaces, 
  allowing for jumps, random coefficients, path dependence, and state- and control-law dependence;
  we establish convergence of states and costs uniformly over admissible controls, convergence of optimal values, 
  and transfer of near-optimal controls.
\end{abstract}
{\bf Mathematics Subject Classification (2020)}: 28B05, 41A65, 60G20

\noindent
{\bf Keywords}: Strong measurability, metric space-valued functions, Banach space-valued functions, adapted approximation, stochastic processes, non-Markovian control.

\section{Introduction}

  In this paper, we consider maps defined on general measure spaces and
  taking values in a possibly non-separable metric space, namely
  \[
    G\colon (\X,\M,\mu) \to (\Y,d).
  \]
  For such functions, the two principal notions considered here are
  measurability and strong measurability; see
  Section~\ref{sec:measurability}. Strong measurability implies measurability
  with respect to the completion of the domain measure; when the domain is
  complete, it therefore implies measurability. Conversely, Borel
  measurability need not imply strong measurability when the target space is
  non-separable. Thus a central question is to identify conditions under which
  ordinary measurability already implies strong measurability.
  Various answers
  to this question appear, either explicitly or implicitly, in the literature on
  functional analysis and measure theory; see, for instance,
  \cite{gorka2026new,MarczewskiSikorski1948,schechter1996handbook}.
  One aim of this paper is to give a unified treatment of conditions under
  which measurability implies strong measurability, highlighting the
  complementary roles of the measurable domain, the underlying measure,
  and the target space.

  The following theorem collects the measurability principles underlying
  our approximation results; see
  Theorems~\ref{thm:pettis-metric}, \ref{thm:first-partial}, and
  \ref{thm:second}, Corollary~\ref{cor:countably-generated}, and
  Remark~\ref{rem:sigma-finite}.
  \begin{theorem}\label{thm-main}
    Let $(\X,\M,\mu)$ be a measure space, let $(\Y,d)$ be a metric space, and let
    \[
      G\colon (\X,\M) \to (\Y,\B(\Y))
    \]
    be measurable. Then:
    \begin{enumerate}[(i)]
      \item $G$ is strongly measurable if and only if it has an
            essentially separable range;
      \item assuming the continuum hypothesis (CH), $G$ is strongly measurable whenever $\M$ is
            countably generated; in particular, this applies when $\X$ is a
            second-countable topological space and $\M$ is its Borel
            $\sigma$-algebra;
      \item if $\mu$ is finite or $\sigma$-finite and the density of $\Y$ is
            strictly smaller than every real-valued measurable cardinal, then
            $G$ is strongly measurable. In particular, under CH this applies
            whenever $\operatorname{dens}(\Y)\leq\mathfrak c$, and it applies
            to every metric space $\Y$ if no real-valued measurable cardinal
            exists.
    \end{enumerate}
  \end{theorem}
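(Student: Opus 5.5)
The proof naturally splits into the three parts, each using a different tool. For (i), I would prove a metric-space Pettis criterion directly. Suppose $G$ is strongly measurable, so $G=\lim G_n$ $\mu$-a.e. with each $G_n$ simple. The union of the finitely many values of all the $G_n$ is a countable set $D$. Off a null set the range of $G$ lies in $\overline D$, which is separable. Conversely, suppose $G(\X\setminus N)\subset Y_0$ with $\mu(N)=0$ and $Y_0$ separable, and let $\{y_k\}$ be dense in $Y_0$. On $\X\setminus N$ define $G_n(x)=y_{k}$, where $k\le n$ is the least index minimizing $d(G(x),y_k)$, and set $G_n$ to a fixed $y_1$ on $N$. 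Each set $\{G_n=y_k\}$ is a finite Boolean combination of sets of the form $\{d(G,y_j)<d(G,y_i)\}$. These are measurable because $x\mapsto d(G(x),y)$ is Borel whenever $G$ is. We also have $d(G_n(x),G(x))=\min_{k\le n}d(G(x),y_k)\to0$ on $\X\setminus N$. So strong measurability is equivalent to essential separability of the range, and parts (ii) and (iii) reduce to showing that $G(\X)$ is essentially separable.

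For (ii), assume CH and let $\M$ be generated by a countable family $\{A_n\}$. The map $\phi(x)=(\1_{A_n}(x))_n\in\{0,1\}^\N$ satisfies $\M=\phi^{-1}(\B(\{0,1\}^\N))$. Hence every $\M$-measurable $G$ factors as $G=H\circ\phi$ for some map $H$ on $\phi(\X)$; such a factorization is well-defined because $G$ is constant on the atoms of $\M$. In particular the range of $G$ has cardinality at most $\mathfrak c=\aleph_1$. Now suppose $G(\X)$ is not separable. Then for some $\varepsilon>0$ there is an uncountable $\varepsilon$-separated set $S\subset G(\X)$, which by CH has cardinality $\aleph_1$. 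Every subset $T\subset S$ is closed, so $G^{-1}(T)\in\M$ and $\phi(G^{-1}(T))$ is determined. This yields $2^{\aleph_1}>\mathfrak c$ distinct sets in $\M$, because distinct $T$ give distinct preimages as long as $S\subset G(\X)$. That contradicts $|\M|\le\mathfrak c$ for a countably generated $\sigma$-algebra. Therefore $G(\X)$ is separable outright, for every $\mu$. The second-countable case follows because its Borel $\sigma$-algebra is generated by a countable base.

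For (iii), first reduce to a finite $\mu$ via a countable partition into sets of finite measure, since a countable union of separable sets is separable. Argue by contradiction: suppose that for every separable $Y_0$, $\mu(G^{-1}(\Y\setminus Y_0))>0$. By Zorn, or by an $\varepsilon$-net argument at scale $1/m$, find a maximal $\varepsilon$-separated family $S$ and, around its points, a disjoint family of open balls $B_s$ of radius $\varepsilon/3$. For every subset $T\subset S$ the union $\bigcup_{s\in T}B_s$ is open, so $\nu(T)=\mu(G^{-1}(\bigcup_{s\in T}B_s))$ defines a finite measure on the full power set $\mathcal P(S)$, countably additive by disjointness. Note that $|S|\le\operatorname{dens}(\Y)$ is below every real-valued measurable cardinal. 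By Ulam's theorem (or the definition of real-valued measurability), such a $\nu$ is concentrated on a countable subset $S_0$: the part of $\nu$ vanishing on singletons must be zero. Taking countably many scales $\varepsilon=1/m$ and refining, we show $\mu$-almost all of $G$ lies in the closure of $\bigcup_m S_0^{(m)}$. This last step is the main obstacle. Points of $G(\X)$ not covered by the chosen balls must still be controlled, so I would cover them by balls of radius $\varepsilon$ around a maximal net (all points are within $\varepsilon$ of $S$). To make the family disjoint, I would replace the balls by Borel pieces defined with respect to a well-ordering of $S$; the union of any subfamily of these pieces must remain Borel. Stone's $\sigma$-discrete refinement of open covers in metric spaces provides exactly this: each subfamily union is a countable union of open-set differences, hence Borel. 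The CH statement follows since under CH no real-valued measurable cardinal is $\le\mathfrak c$ (Ulam/Banach–Kuratowski). The final claim is immediate when no such cardinal exists.
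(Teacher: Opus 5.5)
Your proposal is correct. Parts (i) and (ii) are essentially the paper's arguments. Part (i) uses the same nearest-point selection among $y_1,\dots,y_n$. Part (ii) uses the same CH count of $2^{\aleph_1}>\mathfrak c$ distinct members of $\M$; you index them by preimages of subsets of an $\varepsilon$-separated $S\subset G(\X)$, which are all closed, where the paper uses preimages of unions of $\varepsilon/2$-balls. Your factorization through $\{0,1\}^{\N}$ is harmless but not needed, since any uncountable $S$ already gives $2^{|S|}\ge 2^{\aleph_1}$.

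Part (iii) takes a genuinely different route. The paper pushes $\mu$ forward to $\nu$ and takes a $\sigma$-disjoint base $\bigcup_n\mathcal U_n$ (Engelking 4.4.3). It then shows that the open union $M_n$ of the $\nu$-null members of each $\mathcal U_n$ is null. Otherwise, normalized $\nu$ on unions of subfamilies of these null members would be a probability measure on their power set vanishing on singletons. Off $\bigcup_n G^{-1}(M_n)$ the range therefore lies in a second-countable subspace. Every union in that argument is open, so all the topology is packed into the base theorem.

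You instead work scale by scale. You take a maximal $1/m$-net $S^{(m)}$ and the well-ordered disjointification $P_s=B(s,1/m)\setminus\bigcup_{t<s}B(t,1/m)$. You push $\mu$ to a finite measure on $\mathcal P(S^{(m)})$ and use the cardinal hypothesis to confine its mass to a countable $S_0^{(m)}$. This gives the more concrete statement that almost every value lies within $1/m$ of countably many net points, and it bypasses the $\sigma$-disjoint base theorem. The price is that it needs every subfamily union of the $P_s$ to be Borel.

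That claim is true, but you should prove it rather than cite Stone. Set $P_{s,k}:=\{x\in P_s: B(x,2^{-k})\subseteq B(s,1/m)\}$.
\begin{itemize}
\item Each $P_{s,k}$ is closed: it equals $\{d(\cdot,\Y\setminus B(s,1/m))\ge 2^{-k}\}$ minus an open set.
\item $P_s=\bigcup_k P_{s,k}$, because $B(s,1/m)$ is open.
\item For $t<s$, any $y\in P_{t,k}$ and $x\in P_{s,k}$ satisfy $d(x,y)\ge 2^{-k}$, since $x\notin B(t,1/m)\supseteq B(y,2^{-k})$.
\end{itemize}
Hence, for every $T\subseteq S^{(m)}$, the set $\bigcup_{s\in T}P_{s,k}$ is closed, and $\bigcup_{s\in T}P_s$ is $F_\sigma$.

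As you noticed, the initial disjoint $\varepsilon/3$-balls would not suffice on their own, because they do not cover. The contradiction framing can also be dropped, since the argument is direct.
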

  \noindent
These results clarify the relationship between measurability and strong
measurability for functions taking values in possibly non-separable metric
spaces. Assertion~(i) is a \emph{range-side} criterion: a measurable function
is strongly measurable precisely when its range is separable outside a null
set. This criterion may be viewed as a metric-space analogue of Pettis'
measurability theorem (cf.~\cite{Pettis1938}).
Assertion~(ii) gives a \emph{domain-side} condition under which essential
separability of the range is automatic: CH gives this conclusion when the
underlying $\sigma$-algebra is countably generated. Assertion~(iii) instead
combines finiteness or $\sigma$-finiteness of the measure with the sharp
set-theoretic restriction on the density of the target.
 The range-side criterion in assertion~(i) is known in the literature; see, for
example, \cite[Section~21.4]{schechter1996handbook}.
A special case of
assertion~(ii), in which the domain $\mathbb X$ is a compact interval in
$\mathbb R$, appears in \cite{gorka2026new}.
The measure-concentration core of assertion~(iii) is contained in
Theorem~III of Marczewski and Sikorski~\cite{MarczewskiSikorski1948};
see also the equivalent criteria in
\cite[Theorem~3.13]{Pestov2020}.
For finite $\mu$, assertion~(iii) follows from
their theorem by applying it to the push-forward measure and then invoking
assertion~(i). For $\sigma$-finite $\mu$, one first restricts to countably many
finite-measure pieces; the unrestricted push-forward need not be
$\sigma$-finite. We retain the direct proof to make the map-level implication
and its set-theoretic scope explicit in a form suited to the applications below.

Strong measurability plays an important role in the functional-analytic
framework of partial differential equations (PDEs) and stochastic control. Examples include the treatment
of weak and strong measurability in \cite[Appendix~E.5]{Evans2010} and
the Bochner measurability requirements in
\cite[Section~2]{cheungviscosity}. These applications motivate making
explicit when Borel measurability implies strong measurability and how
the relevant hypotheses can be verified for function-valued maps.

A further objective of this paper is to connect strong measurability
with adapted approximation. Related approximation problems arise in
stochastic analysis, PDEs, and stochastic control and games; see,
for instance,
\cite{bayer2022pricing,bayraktar2024deep,cheung2025viscosity,
cosso2024master,jisheng2025wentzell,liang2026viscosity,
qiu2019uniqueness,qiu2023stochastic}.

For function-space-valued maps, we show that pointwise measurability
and continuity of the sections need not ensure Borel measurability
in the function-space topology; see Example~\ref{ex:not-measurable}.
We provide a verification criterion for function-space metrics
determined by countably many evaluations, using essential separability
of the range to obtain a Borel measurable, strongly measurable version
of the lifted map; see Theorem~\ref{thm:measurable-function-valued} and
Corollary~\ref{cor:essential-range-verification}. This criterion
provides a way to verify the measurability assumptions needed for
the function-valued approximation results.

Under the hypotheses specified below, we construct elementary
$L^p$-approximations, $1\le p<\infty$, of metric-space-valued progressive
processes, and of jointly measurable adapted processes when the
filtration satisfies the usual conditions. We also treat Lipschitz
functionals involving path and Wasserstein variables. For metrizable
vector targets, we construct smooth approximations based on finitely
many past observations when the filtration is generated by a process
with left- or right-continuous paths. Finally, under CH and the stated
measurability, Lipschitz, and moment assumptions, we establish stability
under coefficient approximation for path-dependent controlled
McKean--Vlasov equations in separable Hilbert spaces with jumps, random
coefficients and state- and control-law dependence. The state and cost
estimates are uniform over the admissible control class and yield
convergence of optimal values and transfer of near-optimal controls.

The paper is organized as follows. Section~2 presents the measurability
framework and the basic approximation lemmas. Section~3 establishes
strong-measurability criteria and develops the verification principle for
function-valued maps. Section~\ref{sect:applications} develops elementary
and smooth adapted approximations. Section~\ref{sect:nonmarkovian-control}
applies these results to stability of non-Markovian control problems.

%%%%%%%%%%%%%%%%%%%%%%%%%%%%%%%%%%%%%%%%%%%%%%%%%%%%%%%%%%%%%%%%%%
%%%%%%%%%%%%%%%%%%%%%%%%%%%%%%%%%%%%%%%%%%%%%%%%%%%%%%%%%%%%%%%%%%

\section{Preliminaries}

We begin by recalling several standard notions and results concerning topologies, metrics, Borel $\sigma$-algebras, and measurability for functions taking values in Banach and general metric spaces. We also present a concise proof of a variant of the Pettis measurability theorem for metric-space-valued functions, which will be used in the subsequent analysis.

\subsection{Topologies, Metrics, and Borel \texorpdfstring{$\sigma$}{sigma}-Algebras}

Let $\Y$ be a set endowed with two topologies $\tau_1$ and $\tau_2$. We say that $\tau_2$ is \emph{finer} (or \emph{stronger}) than $\tau_1$ if $\tau_1 \subseteq \tau_2$; equivalently, $\tau_1$ is \emph{coarser} than $\tau_2$. Fundamental notions such as measurability, convergence, separability, and continuity of maps into $\Y$ depend on the chosen topology, with finer topologies typically imposing stronger requirements.
In metric spaces, an analogous comparison is given by domination of metrics. Given two metrics $d_1$ and $d_2$ on $\Y$, we say that $d_2$ \emph{dominates} $d_1$ if there exists a constant $C>0$ such that
\[
C\, d_1(y_1,y_2) \le d_2(y_1,y_2), \qquad \forall\, y_1,y_2 \in \Y.
\]
In this case, the topology induced by $d_2$ is finer than that induced by $d_1$.
 
A topological space is called \emph{separable} if it contains a countable dense subset, and \emph{second countable} if it admits a countable base. While second countability is strictly stronger in general, the two notions coincide for metric spaces.
The \emph{Borel $\sigma$-algebra} of a topological space is the smallest $\sigma$-algebra containing all open sets.
In a metric space, the topology is generated by open balls, but the Borel $\sigma$-algebra need not be generated by open balls alone, since $\sigma$-algebras are closed only under countable unions, whereas topologies are closed under arbitrary unions.
This distinction disappears in separable metric spaces: every open set can be written as a countable union of open balls, and hence the Borel $\sigma$-algebra is generated by the open balls. In contrast, this fails in general for non-separable spaces. For example, in an uncountable discrete space, the Borel $\sigma$-algebra coincides with the power set, whereas the $\sigma$-algebra generated by open balls (singletons) consists only of countable sets and their complements, and is therefore strictly smaller.

Such phenomena arise naturally in analysis. Many function spaces are non-separable even when defined over separable domains; for instance, $C_b(\mathbb{R})$ endowed with the supremum norm is not separable.
This necessitates tools from set theory, including transfinite constructions and cardinality arguments.
In particular, we will use the fact that the Borel $\sigma$-algebra of a second-countable topological space has cardinality at most $\mathfrak{c}$. This can be established via a standard transfinite induction argument.
Here $\mathfrak{c}$ denotes the cardinality of $\mathbb{R}$, and the continuum hypothesis (CH) asserts
that no cardinal lies strictly between $\aleph_0$ and $\mathfrak{c}$;
it is independent of Zermelo-Fraenkel set theory with the Axiom of Choice (ZFC).

\subsection{ Weak measurability, measurability, and strong measurability}\label{sec:measurability}

%%%%%%%%%%%%%%%%%%%%%%%%%%%%%%%%%%%%%%%%%%%%%%%%%%%%%%%%%%%%
%%%%%%%%%%%%%%%%%%%%%%%%%%%%%%%%%%%%%%%%%%%%%%%%%%%%%%%%%%%%%

In this section we recall several notions of measurability for Banach-space-valued
and metric-space-valued functions, and also we present a version of Pettis' theorem for metric-space-valued functions.

\begin{definition}\label{def:measurability}
  Let $(\X,\mathcal{M},\mu)$ be a measure space, $(\mathbb B,\|\cdot\|)$ a Banach
  space, and $(\Y,d)$ a metric space. Denote by $\B_\Y$ the Borel
  $\sigma$-algebra induced by the metric topology on $\Y$.
  \begin{enumerate}[(i)]
    \item A function $G\colon \X \to \Y$ is called measurable if it is
          $(\mathcal{M},\B_\Y)$-measurable, i.e.\ $G^{-1}(B) \in \mathcal{M}$ for every
          $B \in \B_\Y$;
    \item A function $G\colon \X \to \Y$ is called \emph{simple} if it is measurable and takes only finitely many values in $\Y$, that is, there
          exist $y_1,\dots,y_n \in \Y$ and a partition $E_1,\dots,E_n \in \mathcal{M}$
          of $\X$ such that
          \[
            G(x) = \sum_{i=1}^n y_i\,\1_{E_i}(x);
          \]
          Observe that for the summation above to be well-defined, we do not
          need to assume any algebraic structure on $\Y$. To interpret the sum,
          we simply define $G(x)$ to be $y_i$ if $x \in E_i$.
    \item A function $G\colon \X \to \Y$ is called \emph{strongly measurable} if it is the $\mu$-almost everywhere limit of a
          sequence of simple functions;
    \item A function $G\colon \X \to \mathbb B$ is called \emph{weakly measurable} if for every continuous linear functional
          $f \in \mathbb B^\ast$ the scalar-valued function $f \circ G\colon \X \to \R$ is
          measurable. In other words, the following diagram commutes for every $f \in \mathbb B^\ast$:
          \[
            \begin{tikzcd}[row sep=large, column sep=large]
              (\X, \mathcal{M}) \arrow[r, "G"] \arrow[dr, "f\circ G"'] & \mathbb B \arrow[d, "f"] \\
              & (\mathbb{R}, \B_{\mathbb{R}})
            \end{tikzcd}
          \]
  \end{enumerate}
\end{definition}

Clearly, simple functions are measurable, and measurable functions are weakly
measurable because continuous linear functionals are measurable.
A fundamental result on the equivalence of weak measurability and strong measurability is the Pettis measurability theorem (see \cite{Pettis1938}), which states that
    a weakly measurable function $G\colon (\mathbb X,\mathcal{M},\mu) \to (\mathbb B,\|\cdot\|)$  is strongly
    measurable if and only if there exists a
    $\mu$-null set $N \in \mathcal{M}$ such that $G(X \setminus N)$ is a separable
    subset of $\mathbb B$.
As a consequence, if $\mathbb B$ is separable, then every measurable function
$G \colon \X \to \mathbb B$ is strongly measurable, since $G(\X)$ is contained in the
separable space $\mathbb B$ and measurability implies weak measurability. When $\mathbb B$ is
non-separable, however, measurable functions need not be strongly measurable, as
the next example shows.
\begin{example}\label{ex:non-strong}
  Let $\mathbb B$ be a \emph{non-separable} Banach space and let $\B$ be the Borel
  $\sigma$-algebra on $\mathbb B$. Consider the counting measure $\kappa$ on the measurable
  space $(\mathbb B,\mathcal{P}(\mathbb B))$, where $\mathcal{P}(\mathbb B)$ denotes the power set of
  $\mathbb B$.
  Define the identity map
  \[
    i\colon (\mathbb B,\mathcal{P}(\mathbb B),\kappa) \to (\mathbb B,\B), \qquad i(x) = x.
  \]
  This map is continuous and therefore measurable. On the other hand, it is not
  strongly measurable. Indeed, the only $\kappa$-null set in the domain is the
  empty set, and the essential range of $i$ is all of $\mathbb B$, which is non-separable
  by assumption. Thus $i(\mathbb B)$ is not a separable subset of $\mathbb B$, and
  Pettis' measurability theorem shows that $i$ cannot be strongly measurable.
\end{example}

We now recall a metric-space analogue of Pettis' measurability theorem, which
will be used in the arguments that follow. In many applications,
the processes under consideration take values in possibly non-separable {metric
spaces}, rather than Banach spaces. The formulation below is therefore more
directly suited to our setting, and it can be stated without invoking weak
measurability. Although this result has already been known in the literature (see \cite[Section 21.4]{schechter1996handbook} for instance), we include a short proof for completeness and for the reader's sake.

\begin{theorem}\label{thm:pettis-metric}
  Let $(\Y,d)$ be a metric space and let
  $G\colon (\X,\mathcal{M})\to(\Y,\mathcal{B}(\Y))$ be measurable. Then $G$ is
  strongly measurable with respect to $\mu$ if and only if it has an
  \emph{essentially separable} range, i.e., there exists a $\mu$-null set
  $N\in\mathcal M$ such that $G(\X\setminus N)$ is contained in a separable
  subset of $\Y$.
\end{theorem}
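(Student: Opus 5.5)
We prove the two implications separately; the substance is in the converse.

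\emph{Necessity.} Suppose $G$ is strongly measurable, so there are simple functions $G_n\colon\X\to\Y$ and a $\mu$-null set $N\in\mathcal M$ with $G_n(x)\to G(x)$ for every $x\in\X\setminus N$. If the a.e.\ convergence is only given off a non-measurable null set, we first enlarge it to a measurable null set, as the definition of ``$\mu$-almost everywhere'' allows. Let $D=\bigcup_n G_n(\X)$. This is countable, since each $G_n$ takes finitely many values. Then $G(\X\setminus N)\subseteq\overline D$, and the closure of a countable set is separable. Hence $G$ has an essentially separable range.

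\emph{Sufficiency.} Suppose there are a $\mu$-null set $N\in\mathcal M$ and a separable set $S\subseteq\Y$ with $G(\X\setminus N)\subseteq S$. We may replace $S$ by its closure, which is still separable, and fix a countable dense sequence $(s_k)_{k\ge1}$ in $S$. If $S$ is empty, then $\X\setminus N=\emptyset$; any constant function is then simple and converges to $G$ a.e., so we assume $S\neq\emptyset$. For each $n$ and $x\in\X\setminus N$, define $k_n(x)$ as the smallest $k\le n$ minimizing $d(G(x),s_k)$ over $k\in\{1,\dots,n\}$, and set $G_n(x)=s_{k_n(x)}$. On $N$, set $G_n(x)=s_1$. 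Each $G_n$ takes at most $n$ values.

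\emph{Measurability of $G_n$.} The functions $x\mapsto d(G(x),s_k)$ are $\mathcal M$-measurable, because $y\mapsto d(y,s_k)$ is continuous and hence Borel, and $G$ is $(\mathcal M,\mathcal B(\Y))$-measurable. Consequently each set $\{k_n=j\}\cap(\X\setminus N)$ is a finite intersection of sets of the form $\{d(G,s_j)<d(G,s_i)\}$ or $\{d(G,s_j)\le d(G,s_i)\}$, and so lies in $\mathcal M$. Together with $N$, these sets form a measurable partition of $\X$, so $G_n$ is simple.

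\emph{Convergence.} For $x\notin N$ we have $G(x)\in S$, so $\min_{k\le n}d(G(x),s_k)\to\inf_k d(G(x),s_k)=0$ by density. Thus $G_n(x)\to G(x)$ for every $x\in\X\setminus N$, i.e.\ $\mu$-a.e.

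The point needing care is the measurability step. We must avoid sets such as $G^{-1}(\text{ball})$ in $\sigma(\text{balls})$, which is smaller than $\mathcal B(\Y)$ in the non-separable case. Building $G_n$ only through the real-valued measurable functions $d(G(\cdot),s_k)$ sidesteps this, since these are measurable using nothing beyond Borel measurability of $G$ and continuity of the metric.
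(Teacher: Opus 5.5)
Your proof is correct and follows essentially the same route as the paper: necessity via the closure of the countable union of the finite ranges of the simple approximants, and sufficiency via the first-nearest-point selection from a countable dense sequence, with measurability coming from continuity of $d(\cdot,s_k)$. The differences are cosmetic. You take the dense sequence in the closed separable superset and set $G_n=s_1$ on $N$, whereas the paper takes a dense subset of $G(\X\setminus N)$ itself and defines the selection on all of $\X$.
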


\begin{proof}
  For the ``only if" part, the closure of the union of the finite ranges of
  a sequence of simple approximants is separable and contains $G(x)$ wherever
  the approximants converge to $G(x)$.
  For the ``if" part, assume $G$ is measurable and there exists a null set $N$ such that
  $G(\X \setminus N)$ is a separable subspace of $\Y$.
  If $\X\setminus N$ is empty, the conclusion is immediate.
  Since $G(\X \setminus N)$ is separable, there exists a countable dense subset
  $\{y_n:\, n\in \mathbb N\}$ of $G(\X \setminus N)$.
 
  For every $m\in\N$, and for every $x \in \X$, define $G_m(x)$ to be
  the first element in the finite set $\{y_1,\dots,y_m\}$ which is closest to $G(x)$.
  To see that $G_m$ is measurable, observe that for every $n\leq m$,
  \[
    \begin{aligned}
      E_{m,n}={}&\{y\in\Y: d(y,y_n)\leq d(y,y_j)\text{ for all }j\leq m\}\\
      &\cap\{y\in\Y: d(y,y_n)<d(y,y_j)\text{ for all }j<n\}
    \end{aligned}
  \]
  is a Borel set (because the functions $d(\cdot, y_j)$ are continuous),
  and hence the sets $G^{-1}(E_{m,n})$, $n\leq m$, form a measurable partition
  on which $G_m$ is constant. Thus $G_m$ is simple.
  Now observe that for every $x \in \X \setminus N$,
  $d(G(x), G_m(x)) = \min_{1 \leq j \leq m} d(G(x), y_j)$,
  and since $\{y_n:\, n\in \mathbb N\}$ is dense in $G(\X \setminus N)$,
  we have $d(G(x), G_m(x)) \to 0$ as $m \to \infty$.
\end{proof}

On an incomplete measure space, a strongly measurable map $G$ need not itself
be measurable, but it has a measurable version. Indeed, choose a measurable
null set $N$ outside which its simple approximants converge to $G$, and replace
both $G$ and the approximants on $N$ by a fixed value in $\Y$. The modified map
is an everywhere pointwise limit of measurable simple functions, hence is
measurable. In integrals involving strongly measurable maps, we use the
completion of $\mu$; replacing a map by this measurable version does not change
these integrals.

Strong measurability is intimately connected to the approximation of functions by simple functions.
The following lemma provides a tool for simplifying approximation arguments in subsequent sections.
\begin{lemma}\label{lem:monotone-approximation}
  Let $(\Y, d)$ be a metric space.
  Suppose the function $G\colon (\X,\M,\mu) \to (\Y,\B(\Y))$ is strongly measurable.
  Then there exists a sequence $\{G_n:\, n\in \N\}$ of simple functions such that, for $\mu$-almost every $x \in \mathbb X$, the sequence
  $ d(G_n(x), G(x))$ is monotonically decreasing to $0$.
\end{lemma}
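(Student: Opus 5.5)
The plan is to rerun the nearest-point construction from the proof of Theorem~\ref{thm:pettis-metric}, where monotonicity turns out to be automatic. There is one subtlety: the lemma assumes $G$ is measurable (it is stated as a map into $(\Y,\B(\Y))$), and it assumes strong measurability rather than an essentially separable range. So the first step is to extract the separable set. Let $\{S_k\}$ be simple functions with $S_k\to G$ off a null set $N\in\M$. As in the ``only if'' part of Theorem~\ref{thm:pettis-metric}, the countable set $D=\bigcup_k S_k(\X)$ has closure containing $G(\X\setminus N)$. Enumerate $D=\{y_1,y_2,\dots\}$; this is possible provided $\X\neq\emptyset$, and the empty case is trivial.

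Next I will define $G_n(x)$ to be the first element of $\{y_1,\dots,y_n\}$ closest to $G(x)$, exactly as in Theorem~\ref{thm:pettis-metric}. The sets $E_{n,j}$ defined there are Borel, and $G$ is measurable, so the sets $G^{-1}(E_{n,j})$ form a measurable partition of $\X$. Hence $G_n$ is simple. Moreover,
\[
d(G_n(x),G(x))=\min_{1\le j\le n} d(y_j,G(x)).
\]
This is a minimum over an increasing family of finite sets, so it is nonincreasing in $n$ for every $x\in\X$, not merely almost everywhere.

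Finally, for $x\in\X\setminus N$, the point $G(x)$ lies in $\overline{D}$. Hence $\inf_j d(y_j,G(x))=0$, and the decreasing minima tend to $0$. This proves the lemma.

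The only potential obstacle is if one reads $G$ as merely strongly measurable and not necessarily $\M$-measurable, which is the setting of the remark preceding the lemma. In that case $G^{-1}(E_{n,j})$ need not lie in $\M$. I would handle this by first replacing $G$ on $N$ by a fixed value $y_1$, which gives the measurable version described in that remark. Then I would run the construction for this modified map $\tilde G$, which agrees with $G$ off $N$, so the conclusion holds for $\mu$-a.e.\ $x$.
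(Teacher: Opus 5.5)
Your argument is correct, but it takes a different route from the paper's. The paper keeps the given simple approximants $\tilde G_n$ (after passing to the measurable version of $G$) and monotonizes them by a record-keeping selection. It sets $G_n(x)=\tilde G_n(x)$ when this is strictly closer to $G(x)$ than $G_{n-1}(x)$, and $G_n(x)=G_{n-1}(x)$ otherwise. Thus $d(G_n(x),G(x))=\min_{k\le n}d(\tilde G_k(x),G(x))$, and simplicity follows from measurability of the comparison sets.

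You instead keep only the countable value set $D$ of the approximants. You then rerun the nearest-point construction of Theorem~\ref{thm:pettis-metric} on the growing finite sets $\{y_1,\dots,y_n\}$. Both constructions are running minima over increasing candidate families. In the paper the candidates $\tilde G_1(x),\dots,\tilde G_n(x)$ vary with $x$, whereas yours are fixed.

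The paper's version needs no extraction of a separable set. It also gives the pointwise improvement $d(G_n,G)\le d(\tilde G_n,G)$ over whatever sequence one starts with. Yours exhibits $G_n$ as $\pi_n\circ G$ for a Borel nearest-point map $\pi_n\colon\Y\to\{y_1,\dots,y_n\}$. So your approximants depend on $x$ only through the value of (the measurable version of) $G$, and the lemma falls out directly from the Pettis construction.

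Your treatment of the possibly non-measurable case is the same as the paper's first step.
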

\begin{proof}
  Replace $G$ by its measurable version as described above; this does not affect
  the required almost-everywhere conclusion.
  By the definition of strong measurability, there exists a sequence
  $\{\tilde{G}_n:\, n\in \N\}$ of simple functions such that $ d(\tilde{G}_n(x), G(x)) \to 0$
  as $n\to \infty$ for $\mu$-almost every $x \in \X$.
  Define the sequence $\{G_n:\, n\in \N\}$ as follows:
  \[
    G_1(x) := \tilde{G}_1(x), \qquad
    G_n(x) :=
    \begin{cases}
      G_{n-1}(x),     & \text{if } d(\tilde{G}_n(x), G(x)) \geq  d(G_{n-1}(x), G(x)), \\
      \tilde{G}_n(x), & \text{otherwise}.
    \end{cases}
  \]
  The comparison sets are measurable, so each $G_n$ is simple. For
  $\mu$-almost every $x \in \X$, the sequence
  $ d(G_n(x), G(x))$ is monotonically decreasing to $0$.
\end{proof}

A straightforward consequence of the strong measurability and monotone
approximation lemma is the following
approximation result for metric-space-valued functions.

\begin{lemma}[An approximation lemma]\label{lem:approx}
  Let $(\Y,d)$ be a metric space, $(\X,\mathcal{M},\mu)$ an arbitrary measure
  space, $1 \le p < \infty$, and let
  \[
    G\colon (\X,\mathcal{M}) \to (\Y,\B_\Y)
  \]
  be \emph{strongly measurable} and there exists $y_0\in \mathbb Y$ such that $\int_\X d(G(x), y_0)^p \,\mu(dx) < \infty$. Then for every $\varepsilon>0$ there exist
  $y_1,\dots,y_n \in \Y$ and a partition $E_1,\dots,E_n \in \mathcal{M}$ such
  that the simple function
  \[
    \widetilde{G}(x) := \sum_{k=1}^n y_k \,\1_{E_k}(x)
  \]
  satisfies
  \[
    \int_\X d(G(x), \widetilde{G}(x))^p \,\mu(dx) \le \varepsilon.
  \]
\end{lemma}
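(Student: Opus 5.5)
The plan is a dominated convergence argument applied to a sequence of simple approximants. Before taking limits, I will modify the approximants so that their distance to $G$ is controlled pointwise by the integrable function $d(G(\cdot),y_0)$.

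\textbf{Setting up the approximants.} First, replace $G$ by its measurable version as described after Theorem~\ref{thm:pettis-metric}. This changes $G$ only on a $\mu$-null set, so it affects neither the integrability hypothesis nor the integral to be estimated. By strong measurability, there are simple functions $G_n$ with $d(G_n(x),G(x))\to 0$ for $\mu$-a.e.\ $x$. One could also use Lemma~\ref{lem:monotone-approximation}, but monotonicity alone does not give domination: $d(G_1,G)$ need not be $p$-integrable. So the key step is a truncation toward $y_0$. Define
\[
  H_n(x) :=
  \begin{cases}
    G_n(x), & \text{if } d(G_n(x),G(x)) \le d(G(x),y_0),\\
    y_0, & \text{otherwise}.
  \end{cases}
\]

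\textbf{Properties of $H_n$.}
\begin{enumerate}[(a)]
  \item \emph{Simplicity.} The comparison set is measurable, since $x\mapsto d(G_n(x),G(x))$ and $x\mapsto d(G(x),y_0)$ are measurable; on each piece of the partition of $G_n$, the first map is $d(y_i,G(\cdot))$, which is continuous composed with measurable. Hence $H_n$ takes finitely many values, namely those of $G_n$ together with $y_0$, on a measurable partition. So $H_n$ is simple.
  \item \emph{Domination.} In both cases $d(H_n(x),G(x))\le d(G(x),y_0)$, so
  \[
    d(H_n,G)^p\le d(G,y_0)^p\in L^1(\mu).
  \]
  \item \emph{Pointwise convergence.} Fix $x$ in the full-measure convergence set. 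If $G(x)\ne y_0$, then $d(G(x),y_0)>0$, so eventually $d(G_n(x),G(x))\le d(G(x),y_0)$. Hence $H_n(x)=G_n(x)$ for large $n$, and $d(H_n(x),G(x))\to0$. If $G(x)=y_0$, then either $H_n(x)=y_0=G(x)$, or $H_n(x)=G_n(x)$ with $d(G_n(x),G(x))\le 0$. In both cases the distance is $0$.
\end{enumerate}

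\textbf{Conclusion.} Dominated convergence gives $\int_\X d(G,H_n)^p\,d\mu\to0$. Choose $n$ with this integral at most $\varepsilon$, and write $\widetilde G=H_n=\sum_k y_k\1_{E_k}$, merging level sets so that the $E_k$ form a partition of $\X$.

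The main obstacle is exactly the lack of an integrable dominating function for the raw approximants. The truncation to $y_0$ resolves it; the only care needed is measurability of the comparison set, which is why we pass to the measurable version of $G$ first.
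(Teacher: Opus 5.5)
Your proof is correct and follows the paper's argument: pass to a measurable version, replace each simple approximant by $y_0$ wherever it is farther from $G$ than $y_0$ is, and conclude by dominated convergence with dominating function $d(G,y_0)^p$. The only difference is that the paper first invokes Lemma~\ref{lem:monotone-approximation}, which merely makes the convergence monotone and is not needed, while you use arbitrary approximants and a case split. That case split can be shortened by noting that $d(H_n,G)=\min\{d(G_n,G),\,d(G,y_0)\}$.
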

\noindent
  Before proceeding to the proof, we make some remarks:
  \begin{itemize}
    \item Observe that the measure space $(\X,\mathcal{M},\mu)$ is arbitrary and
          need not be finite or $\sigma$-finite.
    \item It is clear that the approximation result does not hold for $p=\infty$.
          Therefore, the approximation is not uniform in $x$, but only in
          average with respect to $\mu$.
  \end{itemize}

\begin{proof}[Proof of Lemma~\ref{lem:approx}]
  Again replace $G$ by its measurable version, which preserves the integrals
  in the statement.
  According to the monotone approximation Lemma~\ref{lem:monotone-approximation}
  and the fact that $G$ is strongly measurable, there exists a sequence
  $\{\bar{G}_n:\, n\in \N\}$ of simple functions such that the sequence
  $d(\bar{G}_n(x), G(x))$
  decreases monotonically to $0$ as $n\to \infty$ for $\mu$-almost every
  $x \in \X$. Set $\tilde{G}_n(x) := \bar{G}_n(x)$ if $d(\bar{G}_n(x), G(x)) \le d(G(x), y_0)$, and
  $\tilde{G}_n(x) := y_0$ otherwise. Then $\tilde{G}_n$ is also a simple function for each $n$, and
  \[
    d(\tilde{G}_n(x), G(x)) \le d(\bar{G}_n(x), G(x)) \quad\text{and}\quad
    d(\tilde{G}_n(x), G(x)) \le d(G(x), y_0) \quad\text{for all } x \in \X.
  \]
  By the dominated convergence theorem
  \[
    \lim_{n \to \infty} \int_\X d(\tilde{G}_n(x), G(x))^p \,\mu(dx) = 0,
  \]
  where the convergence is actually monotone. Hence, for $n$ large enough we have
  \[
    \int_\X d(\tilde{G}_n(x), G(x))^p \,\mu(dx) \le \varepsilon.
  \]
  This proves the desired conclusion.
\end{proof}

%%%%%%%%%%%%%%%%%%%%%%%%%%%%%%%%%%%%%%%%%%%%%%%%%%%%%%%%%%%%%
%%%%%%%%%%%%%%%%%%%%%%%%%%%%%%%%%%%%%%%%%%%%%%%%%%%%%%%%%%%%%

\section{The equivalence of strong measurability and measurability}

In this section, we prove two results identifying conditions under which
measurability implies strong measurability. In contrast to
Theorem~\ref{thm:pettis-metric}, a metric-space analogue of Pettis'
measurability theorem in which separability is imposed on the \emph{range},
the results below identify assumptions under which essential separability is
automatic. More precisely, the first result uses separability of the domain, or
countable generation of the relevant $\sigma$-algebra, while the second combines
$\sigma$-finiteness of the measure with a density condition on the target. These results are presented in the first
two subsections, followed by a discussion of the limitations and a resolution of a related implementation issue.

\subsection{Separable domain and countably generated sigma-algebras}
The first result establishes that, under the continuum hypothesis, every
measurable mapping from a separable measurable domain---or, more generally,
from a domain equipped with a countably generated $\sigma$-algebra---into a metric
space is strongly measurable with respect to an arbitrary measure. The proof
relies on a cardinality argument together with the continuum hypothesis.

\begin{theorem}[First result]\label{thm:first-partial}
  Given a second-countable topological space $(\X,\tau)$ equipped with  Borel $\sigma$-algebra $\mathcal{B}_\X$, and a metric space $(\Y,d)$ endowed  with
  Borel $\sigma$-algebra $\B_\Y$, suppose the mapping $G\colon (\X,\mathcal{B}_\X) \to (\Y,\B_\Y)$ is
  measurable. Then under $CH + ZFC$, $G$ is strongly measurable with respect to \emph{every}
  measure $\mu$ on $(\X,\mathcal{B}_\X)$.
\end{theorem}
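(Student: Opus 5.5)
By Theorem~\ref{thm:pettis-metric}, it suffices to show that $G$ has an essentially separable range. In fact I will show more: the whole range $G(\X)$ is separable, so we may take $N=\emptyset$ and the choice of $\mu$ plays no role. The argument is a cardinality count combined with the pigeonhole principle, and CH enters exactly once.

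\emph{Step 1: count the Borel sets.} Since $\X$ is second countable, $\mathcal B_\X$ is generated by a countable base, so $|\mathcal B_\X|\le\mathfrak c$. This is the transfinite-induction fact recalled in the preliminaries.

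\emph{Step 2: reduce separability to a countability statement.} Suppose, for contradiction, that $G(\X)$ is not separable. In a metric space, separability of a subset $S$ is equivalent to the following: for every $\varepsilon>0$, every $\varepsilon$-separated subset of $S$ is countable. So there exist some $\varepsilon>0$ and an uncountable set $D\subseteq G(\X)$ with $d(y,y')\ge\varepsilon$ for all distinct $y,y'\in D$. Shrinking $D$, we may assume $|D|=\aleph_1$.

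\emph{Step 3: produce too many Borel sets.} Every subset $A\subseteq D$ is open in $\Y$ after fattening. Concretely, the set
\[
U_A=\bigcup_{y\in A}B(y,\varepsilon/2)
\]
is open, and because the balls $B(y,\varepsilon/2)$, $y\in D$, are pairwise disjoint, we have $U_A\cap D=A$. Each preimage $G^{-1}(U_A)$ lies in $\mathcal B_\X$. Moreover, since $D\subseteq G(\X)$, distinct sets $A\neq A'$ give distinct preimages: if $y\in A\setminus A'$ and $x$ is any point with $G(x)=y$, then $x\in G^{-1}(U_A)\setminus G^{-1}(U_{A'})$. Hence
\[
|\mathcal B_\X|\ge 2^{\aleph_1}.
\]
This is where CH is used. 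Under CH we have $\mathfrak c=\aleph_1$, so
\[
2^{\aleph_1}>\aleph_1=\mathfrak c\ge|\mathcal B_\X|,
\]
which is a contradiction. (Alternatively, one could take $|D|=\mathfrak c$ directly if $D$ were large enough, but that is not guaranteed; CH closes exactly this gap, since $2^{\aleph_1}>\mathfrak c$ would otherwise need justification.) Therefore $G(\X)$ is separable, and Theorem~\ref{thm:pettis-metric} gives strong measurability for every $\mu$.

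The step I expect to need the most care is Step~3: checking that the map $A\mapsto G^{-1}(U_A)$ is injective, which is what requires $D\subseteq G(\X)$, and being precise about how CH yields $2^{\aleph_1}>\mathfrak c$. The same argument works verbatim when $\mathcal B_\X$ is replaced by any countably generated $\sigma$-algebra, since such a $\sigma$-algebra also has cardinality at most $\mathfrak c$.
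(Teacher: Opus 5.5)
Your proposal is correct and follows essentially the paper's own argument. A nonseparable range yields an uncountable $\varepsilon$-separated family, unions of $\varepsilon/2$-balls produce $2^{\aleph_1}$ distinct Borel preimages, and CH gives $2^{\aleph_1}>\mathfrak c\ge|\mathcal B_\X|$. The only differences are cosmetic: you choose the separated set in the range rather than in the domain, and you state explicitly that $G(\X)$ itself is separable and where CH enters, both of which the paper leaves implicit or remarks on after its corollary.
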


\begin{proof}
  Suppose, for a contradiction,
  that there exists a measure $\mu$ on $(\X,\mathcal{B}_\X)$ such that $G$ is
  not strongly measurable. Then, by the Pettis measurability theorem
  (Theorem~\ref{thm:pettis-metric}), for every $\mu$-null set $N \in \mathcal{B}_\X$ the set
  $G(\X \setminus N)$ is non-separable. In particular, $G(\X)$ is a non-separable
  subset of $\Y$ (taking $N=\varnothing$).
\\[4pt]
  A standard argument using Zorn's lemma and the fact that $G(\X)$ is a non-separable
  subset of $\Y$ shows that there exist $\varepsilon>0$
  and an uncountable subset $A \subset \X$ such that
  \[
    d(G(x), G(y)) \ge \varepsilon \qquad\text{for all distinct } x,y \in A.
  \]
  For each $y \in A$ let $U_y$ denote the open ball in $\Y$ of radius $\varepsilon/2$
  centered at $G(y)$, and for every $E \subset A$ define the open sets
  \[
    U_E := \bigcup_{y \in E} U_y.
  \]
  Then the family $\{G^{-1}(U_E) : E \subset A\}$ consists of
  \textit{distinct} elements of $\mathcal{B}_\X$ because for every $E \subset A$,
  \[
    G^{-1}(U_E) \cap A = E.
  \]
  because $U_y$ contains $G(y)$ but is at least a distance $\varepsilon/2$ away
  from any $G(z)$ with $z\in A , z \neq y$.
  Thus, the family $\{ U_E : E \subset A\}$ has the same cardinality as the power set
  $\mathcal{P}(A)$. Now observe that
  \[
    |\mathcal B_\X| \geq |\mathcal P(A) | = 2^{|A|} \geq 2^{\aleph_1} > 2^{\aleph_0} = \mathfrak c.
  \]
  Therefore, the cardinality of the Borel $\sigma$-algebra $\mathcal B_\X$ is
  strictly greater than the cardinality of the continuum $\mathfrak c$, which
  obviously contradicts the fact the cardinality of a second-countable
  topological space's Borel $\sigma$-algebra is at most $\mathfrak c$.
\end{proof}

The preceding argument applies to an arbitrary measurable space
$(X,\mathcal{M})$ provided that $\mathcal{M}$ is countably generated.
In particular, the proof does not rely
on any topological structure on $X$. We thus obtain the following corollary.

\begin{corollary}\label{cor:countably-generated}
  Let $(\X,\mathcal{M})$ be a measurable space such that
  $\mathcal{M}$ is generated by a countable collection of sets. Let
  $(\Y,d)$ be a metric space with its Borel $\sigma$-algebra $\B_\Y$, and
  $G\colon (\X,\mathcal{M}) \to (\Y,\B_\Y)$ measurable. Then under $CH + ZFC$,
  $G$ is strongly
  measurable with respect to \emph{every} measure $\mu$ on $(\X,\mathcal{M})$.
\end{corollary}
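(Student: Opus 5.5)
The plan is to rerun the argument of Theorem~\ref{thm:first-partial}. The only topological input there was the bound $|\mathcal B_\X|\le\mathfrak c$, so the first step is to show that a countably generated $\sigma$-algebra has cardinality at most $\mathfrak c$. Let $\mathcal C=\{C_n:n\in\N\}$ generate $\mathcal M$. Define an increasing transfinite family $\mathcal M_\alpha$, $\alpha<\omega_1$, as follows:
\begin{itemize}
\item $\mathcal M_0$ consists of $\mathcal C$ together with the complements of its members;
\item $\mathcal M_{\alpha+1}$ adds all countable unions of members of $\mathcal M_\alpha$ and the complements of these unions;
\item at limit ordinals one takes unions.
\end{itemize}
The family $\bigcup_{\alpha<\omega_1}\mathcal M_\alpha$ is then a $\sigma$-algebra. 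Indeed, any countable subfamily lies in some $\mathcal M_\alpha$ because $\omega_1$ has uncountable cofinality. Hence this union equals $\mathcal M$. By induction $|\mathcal M_\alpha|\le\mathfrak c^{\aleph_0}=\mathfrak c$, and therefore
\[
|\mathcal M|\le\aleph_1\cdot\mathfrak c=\mathfrak c.
\]
(Alternatively, one can note that $\mathcal M=\{f^{-1}(B):B\in\mathcal B(\{0,1\}^{\N})\}$, where $f(x)=(\1_{C_n}(x))_n$, and use $|\mathcal B(\{0,1\}^\N)|\le\mathfrak c$. This avoids redoing the induction and reduces to the second-countable case already invoked.)

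Next, suppose $G$ is not strongly measurable for some $\mu$. By Theorem~\ref{thm:pettis-metric}, taking $N=\varnothing$, the set $G(\X)$ is non-separable. We then extract $\varepsilon>0$ and an uncountable $A\subset\X$ whose image points are pairwise $\varepsilon$-separated. To do this, for each $k$ use Zorn's lemma to choose a maximal $1/k$-separated subset $S_k\subset G(\X)$. If every $S_k$ were countable, then $\bigcup_k S_k$ would be a countable dense subset of $G(\X)$, a contradiction. So some $S_k$ is uncountable. Choosing one preimage for each point of $S_k$ gives $A$ with $\varepsilon=1/k$.

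Finally, for each $E\subset A$ the set $G^{-1}(U_E)$, with $U_E$ the union of the $\varepsilon/2$-balls around $G(y)$ for $y\in E$, is open-preimage and hence in $\mathcal M$. It satisfies $G^{-1}(U_E)\cap A=E$, so the map $E\mapsto G^{-1}(U_E)$ is injective. Under CH we have $|A|\ge\aleph_1$, and so
\[
|\mathcal M|\ge 2^{\aleph_1}>2^{\aleph_0}=\mathfrak c,
\]
where $2^{\aleph_1}>\mathfrak c$ is Cantor's theorem applied to $\aleph_1=\mathfrak c$. This contradicts the first step.

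The main obstacle is the cardinality bound in the first step. The pullback formulation via $f$ is the cleanest route, since it transfers the known bound for Borel sets of the Cantor space, a second-countable space. The rest is verbatim from the proof of Theorem~\ref{thm:first-partial}.
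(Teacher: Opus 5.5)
Your proof is correct and follows the paper's route: the paper observes that the proof of Theorem~\ref{thm:first-partial} uses only the bound $|\mathcal M|\le\mathfrak c$, which holds for any countably generated $\sigma$-algebra, and then reruns the separated-set and power-set counting argument under CH. You also supply details the paper leaves implicit. These are the cardinality bound (by transfinite induction or by pulling back Borel sets under $x\mapsto(\1_{C_n}(x))_n$) and the Zorn's-lemma extraction of an uncountable $\varepsilon$-separated set; you also correctly locate the use of CH in the strict inequality $2^{\aleph_1}>\mathfrak c$.
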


  An important special case of a countably generated $\sigma$-algebra is the
  Borel $\sigma$-algebra on a separable metric space, which is second-countable.
  Therefore, the proof of the preceding theorem implies that the image of a separable
  metric space under a measurable mapping into a metric space is always
  separable under CH.
\begin{example}
  Assume CH. Let $\Y$ be any metric space and let $(\Omega,\mathcal{F}) := (C[0,T],\B_{C[0,T]})$,
  where $C[0,T]$ is endowed with the supremum norm topology. Then $C[0,T]$ is a
  separable metric space as the interval $[0,T]$ is compact. Hence, by Theorem \ref{thm:first-partial}, any measurable random element
  \[
    G\colon (\Omega,\mathcal{F}) \to (\Y,\B_\Y)
  \]
  is strongly measurable with respect to every (probability) measure $\mathbb P$ on
  $(\Omega,\mathcal{F})$. Furthermore, consider $\Omega \times [0,T]$ equipped with
  the predictable $\sigma$-algebra $\mathcal{P}\subseteq \F \otimes \B_{[0,T]}$
  corresponding to some filtration $(\F_t)_{t \in [0,T]}$ on $(\Omega,\F)$.
  Since $\F \otimes \B_{[0,T]}$ is countably generated,
  $|\mathcal P|\leq |\F \otimes \B_{[0,T]}|\leq\mathfrak c$.
  The cardinality argument in the proof of Theorem~\ref{thm:first-partial}
  therefore applies directly to $\mathcal P$, and any
  $\Y$-valued predictable process
  \[
    G\colon (\Omega \times [0,T],\mathcal{P}) \to (\Y,\B_\Y)
  \]
  is strongly measurable with respect to the product measure $\mathbb P \otimes \lambda$ on
  $(\Omega \times [0,T],\mathcal{P})$, where $\lambda$ is the Lebesgue measure on $[0,T]$.
\end{example}

\subsection{Real-valued measurable cardinals and revisiting the equivalence of strong measurability and measurability in \texorpdfstring{$\sigma$}{sigma}-finite measure spaces}

The second result concerns the role of finiteness, and more generally
$\sigma$-finiteness, of the measure on the domain. Its measure-theoretic core is
Theorem~III of Marczewski and Sikorski~\cite{MarczewskiSikorski1948}, developed
in connection with Banach's generalized problem of measure. In current
terminology, their theorem gives a cardinal condition under which a
$\sigma$-finite Borel measure on a metric space is concentrated on a separable
subspace. Combined with
Theorem~\ref{thm:pettis-metric}, this yields a criterion for the equivalence of
measurability and strong measurability. We formulate and prove that criterion
directly for measurable maps, a form particularly suited to metric-space-valued
random fields and stochastic processes.
\\[4pt]

We now introduce a set-theoretic notion which plays a crucial role in our
second result on strong measurability.

\begin{definition}
A cardinal \(\kappa\) is called \emph{real-valued measurable} if there exist a
set \(I\) of cardinality \(\kappa\) and a countably additive probability measure
\(\pi\) defined on every subset of \(I\) such that:
\begin{enumerate}
    \item \(\pi(\{x\})=0\) for every \(x\in I\); and
    \item whenever fewer than \(\kappa\) subsets of \(I\) each have measure
    zero, their union also has measure zero.
\end{enumerate}

The \emph{density} \(\operatorname{dens}(Y)\) of a metric space \(Y\) is the
smallest cardinality of a dense subset of \(Y\).
\end{definition}
 
A standard consequence of this definition is that a set $I$ admits a countably
additive probability measure on $\mathcal P(I)$ which vanishes on singletons if
and only if $|I|$ is at least some real-valued measurable cardinal; see
\cite[Section~3.2]{Pestov2020}. In particular, no such measure exists when
$|I|$ is strictly smaller than every real-valued measurable cardinal. Moreover,
for a metric space $\Y$, every Borel probability measure on $\Y$ is concentrated
on a separable subspace if and only if $\operatorname{dens}(\Y)$ is strictly
smaller than every real-valued measurable cardinal
\cite[Theorem~3.13]{Pestov2020}.
 
Banach and Kuratowski~\cite{BanachKuratowski1929} proved that CH precludes a
countably additive probability measure on $\mathcal P(\R)$ which vanishes on
singletons. This concerns cardinality $\mathfrak c$ and does not rule out
real-valued measurable cardinals larger than $\mathfrak c$. Since every
real-valued measurable cardinal is weakly inaccessible, CH nevertheless implies
that every cardinal at most $\mathfrak c$ is strictly smaller than every
real-valued measurable cardinal; see \cite[Introduction, p.~3]{Pestov2020}.
Consequently, under CH the density hypothesis below holds whenever
$\operatorname{dens}(\Y)\leq\mathfrak c$. This includes almost all metric spaces
of interest in applications.

\begin{lemma}[Theorem~4.4.3 in Engelking \cite{Engelking}]\label{lem:sigma-disjoint}
  Every metrizable space admits a $\sigma$-disjoint base, i.e.\ a base which is the
  union of countably many families of pairwise disjoint open sets.
\end{lemma}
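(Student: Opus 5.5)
The strategy is the Bing--Nagata--Smirnov route: show that a metrizable space has a $\sigma$-locally finite base, then use paracompactness-type arguments to make each locally finite family disjoint. Concretely, I would fix a metric $d$ compatible with the topology and a well-ordering $\prec$ of the space $\Y$. For each $n\in\N$ I would build a family $\mathcal U_n$ refining the cover by balls of radius $2^{-n}$, in the manner of A.~H.~Stone's theorem. For each point $y$, set
\[
S_n(y)=B(y,2^{-n})\setminus\bigcup_{z\prec y}B(z,3\cdot 2^{-n}),
\]
and let $V_n(y)$ be the $2^{-n}$-neighbourhood of $S_n(y)$ (or of a suitably shrunk version). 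The $V_n(y)$, $y\in\Y$, are open. Distinct $y\prec z$ give $V_n(y)$ and $V_n(z)$ at positive distance: every point of $S_n(z)$ lies outside $B(y,3\cdot2^{-n})$, whereas $S_n(y)\subset B(y,2^{-n})$. So $\{V_n(y)\}_y$ is a \emph{discrete}, in particular pairwise disjoint, family.

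Next I would check that $\bigcup_n\{V_n(y):y\in\Y\}$, perhaps after intersecting with balls or iterating the construction over several radii, covers each point at arbitrarily small scales. To see this, take $x\in\Y$ and $n$, and let $y$ be the $\prec$-least point with $x\in B(y,3\cdot 2^{-n})$. A variant with radii chosen carefully (e.g.\ $S_{n,k}$ for pairs of scales) guarantees that $x\in V_{m}(y')$ for some $m\geq n$ with $\operatorname{diam}V_m(y')\le 2^{-n+3}$.

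The families indexed by the countably many radii or scale pairs are each pairwise disjoint, and their union has sets of diameter tending to $0$ around every point. That makes it a base. It is therefore a $\sigma$-disjoint base.

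The main obstacle is the second step: arranging the radii so that the disjoint families still cover every point at every small scale. A single disjoint family never covers the space. The standard remedy, as in Engelking's proof, is first to obtain a $\sigma$-discrete base: Stone's theorem yields a $\sigma$-discrete open refinement of each cover $\{B(y,2^{-n})\}$. One then uses the fact that a discrete family is in particular pairwise disjoint. I would therefore rely on the $\sigma$-discrete refinement of each ball-cover, the Stone/Bing construction, and take the union over $n$ of these refinements. Each refinement is a countable union of discrete families, and covers with mesh at most $2^{-n+1}$ give a base.
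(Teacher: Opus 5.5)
Your final argument (apply A.~H.~Stone's theorem to each cover $\{B(y,2^{-n}):y\in\Y\}$, take the union over $n$ of the resulting $\sigma$-discrete open refinements, each of mesh at most $2^{1-n}$, and use that discrete families are pairwise disjoint) is correct and is the standard proof of the cited result, which the paper does not reprove but simply quotes from Engelking. The single-scale construction you sketch first should be dropped, since its families need not cover: on $\R$ with $0$ first in the well-order, the point $2^{1-n}$ lies in no $V_n(y)$. If you want a self-contained argument, the fix is a second radius: let $V_{n,k}(y)$ be the union of the balls $B(x,2^{-k})$ over all $x$ such that $y$ is the $\prec$-least point with $d(x,y)<2^{-n}$ and $B(x,3\cdot2^{-k})\subseteq B(y,2^{-n})$. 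For fixed $(n,k)$ these sets lie at mutual distance at least $2^{-k}$, and each $x$ lies in some $V_{n,k}(y)\subseteq B(x,2^{1-n})$ once $k$ is large.
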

 
We now use these facts to obtain a second result on strong measurability for
finite-measure spaces.

\begin{theorem}[Second result]\label{thm:second}
  Let $(\X,\mathcal{M},\mu)$ be a \emph{finite} measure space, $(\Y,d)$ a metric
  space, and $G\colon (\X,\mathcal{M}) \to (\Y,\B_\Y)$ a measurable function.
  \begin{enumerate}[(i)]
    \item If $\operatorname{dens}(\Y)$ is strictly smaller than every
          real-valued measurable cardinal, then $G$ is strongly measurable.
    \item If a real-valued measurable cardinal exists, then there exist a
          finite measure space $(\X,\mathcal{M},\mu)$, a non-separable Banach
          space $(\Y,\|\cdot\|)$, and a measurable function
          $G\colon (\X,\mathcal{M}) \to (\Y,\B_\Y)$ which is \emph{not} strongly
          measurable.
  \end{enumerate}
\end{theorem}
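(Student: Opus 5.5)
For part (i) the plan is to reduce everything to Theorem~\ref{thm:pettis-metric}: it suffices to find a $\mu$-null set $N$ with $G(\X\setminus N)$ contained in a separable subset of $\Y$. Consider the push-forward $\nu:=\mu\circ G^{-1}$, a finite Borel measure on $\Y$. We show that $\nu$ is concentrated on a separable closed set $S$, i.e.\ $\nu(\Y\setminus S)=0$. Then $N:=G^{-1}(\Y\setminus S)\in\M$ is $\mu$-null and $G(\X\setminus N)\subseteq S$.

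To prove concentration, use the $\sigma$-disjoint base $\mathcal U=\bigcup_k\mathcal U_k$ of Lemma~\ref{lem:sigma-disjoint}. Each family $\mathcal U_k$ consists of pairwise disjoint open sets. Choosing one point in each member gives a $d$-discrete family, so $|\mathcal U_k|\le\operatorname{dens}(\Y)$.

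Fix $k$. Since $\nu$ is finite, at most countably many $U\in\mathcal U_k$ satisfy $\nu(U)>0$. Let $\mathcal U_k^0$ be the rest and set $W_k:=\bigcup\mathcal U_k^0$, which is open. The key claim is $\nu(W_k)=0$. Suppose instead $\nu(W_k)>0$. Define a measure on $\mathcal P(\mathcal U_k^0)$ by
\[
\pi(\mathcal A):=\nu\Big(\bigcup\mathcal A\Big)\Big/\nu(W_k).
\]
This is well defined because every union of open sets is open, hence Borel. It is countably additive because the sets are disjoint, and it vanishes on singletons. This produces a probability measure on all subsets of a set $I=\mathcal U_k^0$ of cardinality at most $\operatorname{dens}(\Y)$ that vanishes on singletons. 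By the standard consequence quoted after the definition, $|I|$ must then be at least some real-valued measurable cardinal, contradicting the hypothesis. This step is the heart of the proof and the main obstacle. One point needs care: the quoted fact is stated for sets of size at least a real-valued measurable cardinal, so I would verify the direction used, namely that such a measure forces $|I|\ge$ some real-valued measurable cardinal. This follows by taking the least $\kappa$ for which a $\kappa$-additivity failure occurs, the usual Ulam argument.

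With the claim in hand, let $W:=\bigcup_k W_k$, so $\nu(W)=0$, and let $S:=\Y\setminus W$. The countably many sets $U\in\mathcal U$ with $\nu(U)>0$, intersected with $S$, form a countable base for the subspace $S$. Indeed, every open subset of $\Y$ meeting $S$ is a union of members of $\mathcal U$, and those members that meet $S$ are not contained in $W$, so they have positive measure. Hence $S$ is second countable, therefore separable, and $\nu(\Y\setminus S)=0$, which completes (i).

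For part (ii), let $\kappa$ be a real-valued measurable cardinal with witness $(I,\pi)$, where $|I|=\kappa$. Take $\X=I$, $\M=\mathcal P(I)$, $\mu=\pi$, and $\Y=\ell^2(I)$, which is non-separable. Let $G(i)=e_i$ be the unit vector. Every subset of $I$ is in $\M$, so $G$ is measurable. Distinct values of $G$ are at distance $\sqrt2$, so any separable set contains only countably many of them. Thus if $G(I\setminus N)$ were separable, $I\setminus N$ would be countable. Since $\pi$ is countably additive and vanishes on points, $\pi(I\setminus N)=0$, so $\mu(N)=1\neq0$. Therefore no null set $N$ makes the range separable, and Theorem~\ref{thm:pettis-metric} shows $G$ is not strongly measurable.
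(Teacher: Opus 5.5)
Your proposal is correct and follows the paper's proof essentially step for step. Both arguments push forward to $\nu$, use the $\sigma$-disjoint base, show that the union of the $\nu$-null members of each $\mathcal U_k$ is $\nu$-null via a measure on $\mathcal P(\mathcal U_k^0)$ vanishing on singletons and the real-valued-measurable-cardinal fact, and conclude with Theorem~\ref{thm:pettis-metric}; for (ii) the paper uses $\ell^\infty(\X)$ with $G(x)=\1_{\{x\}}$ instead of $\ell^2(I)$ with unit vectors, which is an inessential difference. Your Ulam-type check of the direction of that cardinal fact, and your argument that base members meeting $S$ have positive measure, are slightly more explicit than the paper's write-up, and for $|\mathcal U_k|\le\operatorname{dens}(\Y)$ the paper's justification (choose a point of a fixed dense set in each nonempty member, injective by disjointness) is the cleanest.
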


\begin{proof}
  We first prove (i). Let $\nu$ be the pushforward measure of $\mu$ under $G$,
  i.e.\ for every $B \in \B_\Y$,
  \[
    \nu(B) := \mu(G^{-1}(B)).
  \]
  Since $\mu$ is finite, $\nu$ is a finite measure on $(\Y,\B_\Y)$.
 
  If $\Y$ is separable, the conclusion follows directly from
  Theorem~\ref{thm:pettis-metric}. We may therefore suppose that $\Y$ is
  non-separable and fix a dense set $D\subseteq\Y$ with
  $|D|=\operatorname{dens}(\Y)$.
 
  According to Lemma~\ref{lem:sigma-disjoint}, $\Y$ admits a
  $\sigma$-disjoint base $\mathcal{U} = \bigcup_{n=1}^\infty \mathcal{U}_n$, which
  means that for each $n \in \N$ the elements of $\mathcal{U}_n$ are pairwise
  disjoint and $\mathcal{U}$ forms a base for the topology of $\Y$.
 
  Observe that
  \[
    \{ U \in \mathcal{U} : \nu(U) > 0\} = \bigcup_{n,m=1}^\infty
    \{ U \in \mathcal{U}_n : \nu(U) > 1/m\},
  \]
  and $\{ U \in \mathcal{U}_n : \nu(U) > 1/m\}$ is finite for each $n \in \N$,
  because $\nu$ is finite. Hence the set
  \[
    \{ U \in \mathcal{U} : \nu(U) > 0\}
  \]
  is countable.
 
  For each $n \in \N$ let $\mathcal{V}_n$ be the collection of all sets
  $U \in \mathcal{U}_n$ such that $\nu(U)=0$, and define
  \[
    M_n := \bigcup_{U \in \mathcal{V}_n} U.
  \]
  Then $M_n$ is an open (hence measurable) subset of $\Y$. We claim that
  $\nu(M_n)=0$ for every $n$.
 
  Suppose, on the contrary, that $\nu(M_n)>0$ for some $n$. For every
  $\mathcal E\subseteq\mathcal V_n$, the union $\bigcup_{U\in\mathcal E}U$ is
  open, so we may define
  \[
    \widetilde{\nu}(\mathcal E)
    :=\frac{\nu\bigl(\bigcup_{U\in\mathcal E}U\bigr)}{\nu(M_n)},
    \qquad \mathcal E\subseteq\mathcal V_n.
  \]
  Since the members of $\mathcal V_n$ are pairwise disjoint,
  $\widetilde{\nu}$ is a countably additive probability measure on
  $\mathcal P(\mathcal V_n)$. Moreover,
  $\widetilde{\nu}(\{U\})=0$ for every $U\in\mathcal V_n$.
 
  Each nonempty open set $U\in\mathcal V_n$ meets $D$. Choosing one point of
  $D\cap U$ for each $U\in\mathcal V_n$ gives an injection
  $\mathcal V_n\to D$, because the members of $\mathcal V_n$ are disjoint.
  Hence
  \[
    |\mathcal V_n|\le |D|=\operatorname{dens}(\Y).
  \]
  The existence of $\widetilde{\nu}$ implies that $|\mathcal V_n|$ is at least
  some real-valued measurable cardinal, contrary to the hypothesis in part~(i).
  Therefore, $\nu(M_n)=0$ for every $n\in\N$.
 
  Now let
  \[
    N := \bigcup_{n=1}^\infty G^{-1}(M_n).
  \]
  Then $\mu(N) = \nu\Bigl(\bigcup_{n=1}^\infty M_n\Bigr) = 0$. Moreover,
  \[
    G(\X \setminus N) \subseteq \Y_0 := \Y \setminus \bigcup_{n=1}^\infty M_n.
  \]
  By construction, $\Y_0$ has a countable base: for each $n$, all elements of
  $\mathcal{U}_n$ with positive $\nu$-measure form a countable family, and they
  cover $\Y_0$ when combined over $n$. Thus $\Y_0$ is a separable metric space. By
  Pettis' theorem, $G$ is strongly measurable as desired.
  This proves (i).
 
  For (ii), let $\kappa$ be a real-valued measurable cardinal and choose a set
  $\X$ of cardinality $\kappa$ together with a witnessing probability measure
  \[
    \pi\colon \mathcal{P}(\X) \to [0,1]
  \]
  which is countably additive and satisfies $\pi(\{x\})=0$ for all $x\in\X$.
  Let $\Y := \ell^\infty(\X)$ be the space of bounded real-valued functions on $\X$,
  equipped with the supremum norm $\|\cdot\|_\infty$. Then $\Y$ is a non-separable
  Banach space because $\X$ is uncountable.
\\[4pt]
  Define
  \[
    G\colon (\X,\mathcal{P}(\X)) \to (\Y,\B_\Y), \qquad
    G(x) := \1_{\{x\}},
  \]
  viewed as a function in $\ell^\infty(\X)$. The map $G$ is clearly measurable.
  However, it is not strongly measurable. Indeed, for any $\pi$-null set
  $N \subset \X$ we have that $\X \setminus N$ has positive $\pi$-measure and is
  therefore uncountable. Moreover, for distinct $x,y \in \X$,
  \[
    \|G(x) - G(y)\|_\infty = 1,
  \]
  so $G(\X \setminus N)$ is an uncountable set whose points are pairwise distant by
  $1$. As a consequence, $G(\X \setminus N)$ is not a separable subset of $\Y$, and
  Pettis' theorem implies that $G$ is not strongly measurable. This completes the
  proof.
\end{proof}

\begin{remark}\label{rem:sigma-finite}
  The assumption that $\mu$ is finite in Theorem~\ref{thm:second}(i) can be
  weakened to $\sigma$-finiteness. Indeed, if $\mu$ is $\sigma$-finite, then there
  exist disjoint measurable sets $(\X_n)_{n=1}^\infty$ such that
  $\X = \bigcup_{n=1}^\infty \X_n$ and $\mu(\X_n) < \infty$ for each $n$. If
  $G\colon (\X,\mathcal{M}) \to (\Y,\B_\Y)$ is measurable, then its restriction
  $G_n := G|_{\X_n}$ is measurable on $(\X_n,\mathcal{M}|_{\X_n})$ for each $n$.
  Applying part~(i) of the theorem, we see that each $G_n$ is strongly measurable.
  Hence there exists a null set $N_n \in \mathcal{M}|_{\X_n}$ such that
  $G_n(\X_n \setminus N_n)$ is separable. Let $N := \bigcup_{n=1}^\infty N_n$. Then
  $\mu(N)=0$ and
  \[
    G(\X \setminus N) = \bigcup_{n=1}^\infty G_n(\X_n \setminus N_n)
  \]
  is a countable union of separable sets, hence separable. Pettis' theorem again
  implies that $G$ is strongly measurable.
 
  Thus part~(i) holds without change for $\sigma$-finite measures. If no
  real-valued measurable cardinal exists, its density hypothesis is automatic
  for every metric target. If such a cardinal exists, part~(ii) gives a
  counterexample on a probability space. Under CH, part~(i) applies in
  particular whenever $\operatorname{dens}(\Y)\le\mathfrak c$, but CH alone
  does not settle the assertion for arbitrary metric targets.
\end{remark}

%%%%%%%%%%%%%%%%%%%%%%%%%%

\subsection{On verifying measurability assumptions}\label{sect:verifying-measurability}

The equivalence above is primarily a characterization rather than a verification
tool: for function-valued maps, checking Borel measurability can be as delicate
as checking strong measurability. The purpose of this subsection is to reverse
the usual order of verification. We first obtain measurability of inverse images
of balls from countably many scalar evaluations. Separability of the actual
range then upgrades this ball-measurability to Borel measurability and hence to
strong measurability. Essential separability gives the same conclusion up to a
null-set modification. This provides a practical route to the approximation
results without requiring direct verification of inverse images of arbitrary
open subsets of a non-separable function space.
 
In many situations, we are given a functional $G$ defined on the product of a
measurable space $(\mathbb X,\mathcal A, \mu)$ and a metric space
$(\mathbb Y,d)$, and it has a certain regularity in the
second variable. We want to use our approximation results to approximate the
function $G$. In order to do this, we first push the second variable of $G$ into
the codomain because our approximation results are stated for functions defined
on a measure space and taking values in a metric space. More precisely, we define
\begin{equation}\label{eq:Gtilde}
\widetilde G:\mathbb X\to \mathcal F(\Y) \subseteq\mathbb R^{\Y},
\qquad
\widetilde G(x) = G(x,\cdot),
\end{equation}
where $\mathcal F(\Y)$ is a suitable function space of functions on $\Y$ taking
values, for example, in $\R$.
 
Next, we need to equip $\mathcal F(\Y)$ with a metric $\widetilde d$. At this step,
we have a lot of freedom in choosing the metric $\widetilde d$ on $\mathcal F(\Y)$.
For example, we can choose $\widetilde d$ to be the supremum metric if
$\mathcal F(\Y)$ in \eqref{eq:Gtilde} consists of bounded functions. The trade-off
is that choosing a strong metric $\widetilde d$ on $\mathcal F(\Y)$ will make it
harder to verify the measurability of $\widetilde G$ and to apply the
approximation results, while choosing a weak metric $\widetilde d$ on
$\mathcal F(\Y)$ might make the approximation results less effective.
 
The following example shows that the measurability of $\widetilde G$ is not
automatic, even if $G$ is measurable in the first variable and continuous in the
second variable.

\begin{example}\label{ex:not-measurable}
For every $A\subseteq \mathbb N$, define the piecewise affine continuous function
\[
f_A(y):=\sum_{n\in A}\max\{1-4|y-n|,0\},\qquad y\in\mathbb R.
\]
Then $f_A:\mathbb R\to[0,1]$, $f_A(n)=\1_A(n)$ for every $n\in\mathbb N$,
and its support is the union of the intervals $[n-1/4,n+1/4]$ for $n\in A$.
Now let $\mathbb X = [0,1)$ with the Borel $\sigma$-algebra
$\mathcal B$, $\mathbb Y = \mathbb R$ with the usual metric. Fix $x \in [0,1)$,
and let $(x_1x_2x_3\cdots)_2$ be the binary expansion of $x$ with the convention
that the expansion does not end with infinitely many $1$'s. Let
$A_x= \{n\in\mathbb N : x_n = 1\}$, and define
\[
G:\mathbb X \times \mathbb Y \to \R,
\qquad
G(x,y) = f_{A_x}(y).
\]
Then $G$ is continuous in the second variable by construction, and it is
measurable in the first variable since if we fix $y\in\mathbb Y$, then
$G(\cdot,y)$ can only take two values, and is non-zero only if $y$ is in an
interval of the form $[n-1/4,n+1/4]$ for some $n\in\mathbb N$, and $x_n = 1$.
However, the function $\widetilde G$ defined by \eqref{eq:Gtilde} into
$C_b(\mathbb R)$ with the supremum metric is not measurable.
This is because the range of $\widetilde G$ has the discrete topology under the
supremum metric, and the map $x\mapsto \widetilde G(x)$ is injective, so
$\widetilde G$ is not measurable since $[0,1)$ has non-measurable subsets.
\end{example}

The example above shows that the measurability of $\widetilde G$ is not automatic,
but an interesting fact is that the preimage of every open ball in $C_b(\R)$ with
the supremum metric under $\widetilde G$ is measurable. We can generalize this
observation to the case where $\mathcal F(\Y)$ is a general function space equipped
with a metric $\widetilde d$ that can be reconstructed from countably many
evaluations. This is the content of the following theorem.

\begin{theorem}\label{thm:measurable-function-valued}
Let $\mathbb Y$ be a set and let $\mathcal F(\Y) \subset \mathbb R^{\mathbb Y}$
be a collection of functions. Suppose $\mathcal F(\Y)$ is equipped with a metric
$\widetilde d$, and denote by $\mathcal B(\mathcal F(\Y))$ the associated Borel
$\sigma$-algebra.
Assume that there exists a sequence $(y_n)_{n\ge1}$ in $\mathbb Y$ and a Borel
measurable map
\[
\rho:\mathbb R^{\mathbb N}\times \mathbb R^{\mathbb N}\to [0,\infty]
\]
such that, on defining
\[
\Phi:\mathcal F(\Y)\to \mathbb R^{\mathbb N},
\qquad
\Phi(f) = \bigl(f(y_n)\bigr)_{n\ge1},
\]
we have
\[
\widetilde d(f,g) = \rho\bigl(\Phi(f),\Phi(g)\bigr)
\qquad\text{for all }f,g\in\mathcal F(\Y).
\]
Let $(\mathbb X,\mathcal A)$ be a measurable space, and let
\[
G:\mathbb X\times \mathbb Y \to \mathbb R
\]
be such that:
\begin{enumerate}[(i)]
    \item for every $y\in\mathbb Y$, the map $x\mapsto G(x,y)$ is $\mathcal A$-measurable;
    \item for every $x\in\mathbb X$, the function $G(x,\cdot)$ belongs to $\mathcal F(\Y)$.
\end{enumerate}
Define
\[
\widetilde G:\mathbb X\to \mathcal F(\Y),
\qquad
\widetilde G(x) = G(x,\cdot).
\]
Then the preimage of every open ball in $\mathcal F(\Y)$ under $\widetilde G$ is
measurable in $\mathbb X$. If the range $\widetilde G(\mathbb X)$ is separable
under $\widetilde d$, then $\widetilde G$ is
$\mathcal A/\mathcal B(\mathcal F(\Y))$-measurable and is strongly measurable
with respect to every measure on $(\mathbb X,\mathcal A)$.
\end{theorem}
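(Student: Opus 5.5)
The plan is to reduce everything to measurability of the map $x\mapsto \Phi(\widetilde G(x))=(G(x,y_n))_{n\ge1}$ into $\R^{\N}$ with its product (Borel) $\sigma$-algebra. By hypothesis~(i), each coordinate $x\mapsto G(x,y_n)$ is $\mathcal A$-measurable. Since the Borel $\sigma$-algebra of the Polish space $\R^{\N}$ coincides with the product $\sigma$-algebra, the map $\Psi:=\Phi\circ\widetilde G\colon \X\to\R^{\N}$ is $\mathcal A/\B(\R^{\N})$-measurable.

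For a ball $B(f,r)=\{g\in\mathcal F(\Y):\widetilde d(f,g)<r\}$, I will write
\[
\widetilde G^{-1}(B(f,r))=\{x:\rho(\Phi(f),\Psi(x))<r\}.
\]
The map $x\mapsto(\Phi(f),\Psi(x))$ is measurable into $\R^{\N}\times\R^{\N}$. The first component is constant, so this follows from the measurability of $\Psi$. Here $\B(\R^{\N}\times\R^{\N})=\B(\R^{\N})\otimes\B(\R^{\N})$ by separability. Composing with the Borel map $\rho$ gives a measurable $[0,\infty]$-valued function, so the preimage is in $\mathcal A$. This settles the first claim. Closed balls, and balls centred at arbitrary points of $\mathcal F(\Y)$, are handled identically.

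Next, assume $S:=\widetilde G(\X)$ is separable, and fix a countable dense set $\{f_k\}\subset S$. The key step, and the only real obstacle, is passing from balls to arbitrary open sets $U\subseteq\mathcal F(\Y)$, since $\mathcal F(\Y)$ itself may be nonseparable. I claim that
\[
U\cap S=\bigcup\{B(f_k,q)\cap S:\ k\in\N,\ q\in\mathbb Q_{>0},\ B(f_k,2q)\subseteq U\}.
\]
The inclusion ``$\supseteq$'' is clear. For ``$\subseteq$'', given $g\in U\cap S$, pick $q$ rational with $B(g,3q)\subseteq U$, then pick $f_k$ with $\widetilde d(f_k,g)<q$. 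Then $g\in B(f_k,q)$ and $B(f_k,2q)\subseteq B(g,3q)\subseteq U$. Since $\widetilde G$ takes values in $S$, this gives $\widetilde G^{-1}(U)=\bigcup \widetilde G^{-1}(B(f_k,q))$ over the countable index set, which is a countable union of sets in $\mathcal A$ by the first part. The open sets generate $\mathcal B(\mathcal F(\Y))$, so $\widetilde G$ is $\mathcal A/\mathcal B(\mathcal F(\Y))$-measurable.

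Finally, for any measure $\mu$ on $(\X,\mathcal A)$, $\widetilde G$ is measurable and its range $\widetilde G(\X)$ is separable. We may take $N=\varnothing$ as the null set, so Theorem~\ref{thm:pettis-metric} yields strong measurability. One could also build the simple approximants directly as in its proof, using the ball preimages to check that the partition sets are measurable.
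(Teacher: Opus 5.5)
Your proof is correct and follows the paper's argument: measurability of $x\mapsto (G(x,y_n))_{n\ge1}$, composed with the Borel map $\rho$, gives measurable ball preimages. A countable dense subset of the range then reduces the preimage of every open set to a countable union of ball preimages, after which Theorem~\ref{thm:pettis-metric} applies with $N=\varnothing$. Your condition $B(f_k,2q)\subseteq U$ plays the same role as the paper's requirement that the closure of $B(f_k,q)$ lie in $O$.
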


\begin{proof}
Fix $f\in\mathcal F(\Y)$. By assumption,
\[
\widetilde d\bigl(\widetilde G(x),f\bigr)
=
\rho\bigl(\Phi(\widetilde G(x)),\Phi(f)\bigr).
\]
Since
\[
\Phi(\widetilde G(x)) = \bigl(G(x,y_n)\bigr)_{n\ge1},
\]
and each map $x\mapsto G(x,y_n)$ is $\mathcal A$-measurable, it follows that
\[
x\mapsto \Phi(\widetilde G(x))
\]
is $\mathcal A/\mathcal B(\mathbb R^{\mathbb N})$-measurable. As $\rho$ is
Borel measurable and $\Phi(f)$ is fixed, the map
\[
x\mapsto \widetilde d\bigl(\widetilde G(x),f\bigr)
\]
is $\mathcal A$-measurable.
 
Hence, for every $f\in\mathcal F(\Y)$ and $r>0$,
\[
\widetilde G^{-1}\bigl(B_{\widetilde d}(f,r)\bigr)
=
\{x\in\mathbb X : \widetilde d(\widetilde G(x),f)<r\}
\in \mathcal A.
\]
Suppose now that $R:=\widetilde G(\mathbb X)$ is separable, and choose a countable
dense subset $(f_k)_{k\geq1}$ of $R$. For every open set
$O\subseteq\mathcal F(\Y)$, the relative open set $O\cap R$ is the union of a
countable collection of relative balls $R\cap B(f_k,q)$, where $q>0$ is rational
and the closure of the ambient ball $B(f_k,q)$ is contained in $O$. Since $\widetilde G$ takes values in
$R$, the inverse image $\widetilde G^{-1}(O)$ is therefore a countable union of
measurable inverse images of balls. Thus $\widetilde G$ is Borel measurable.
Its range is separable, so Theorem~\ref{thm:pettis-metric} shows that it is
strongly measurable with respect to every measure on
$(\mathbb X,\mathcal A)$.
\end{proof}

\begin{corollary}[Essential-range verification]
\label{cor:essential-range-verification}
In the setting of Theorem~\ref{thm:measurable-function-valued}, let $\mu$ be a
measure on $(\mathbb X,\mathcal A)$. If $\widetilde G$ has an essentially
separable range, then it agrees $\mu$-almost everywhere with a measurable,
strongly measurable map. In particular, $\widetilde G$ is strongly measurable
in the usual almost-everywhere sense. If $(\mathbb X,\mathcal A,\mu)$ is
complete, then $\widetilde G$ itself is also Borel measurable.
\end{corollary}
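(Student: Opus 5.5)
Fix a $\mu$-null set $N\in\mathcal A$ and a separable set $S\subseteq\mathcal F(\Y)$ with $\widetilde G(\X\setminus N)\subseteq S$. We may assume $\X\setminus N\neq\varnothing$; otherwise any constant map works. Pick $x_0\in\X\setminus N$ and define the modification
\[
\widehat G(x):=\widetilde G(x)\ \text{for } x\notin N,\qquad \widehat G(x):=\widetilde G(x_0)\ \text{for } x\in N .
\]
Then $\widehat G=\widetilde G$ off $N$, and the range of $\widehat G$ lies in $S$, so it is separable (a subset of a separable metric space is separable).

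\textbf{Measurability of $\widehat G$.} We use the first part of the proof of Theorem~\ref{thm:measurable-function-valued}: for each $f\in\mathcal F(\Y)$, the map $x\mapsto\widetilde d(\widetilde G(x),f)$ is $\mathcal A$-measurable. Consequently
\[
x\mapsto \widetilde d(\widehat G(x),f)=\1_{N^c}(x)\,\widetilde d(\widetilde G(x),f)+\1_N(x)\,\widetilde d(\widetilde G(x_0),f)
\]
is measurable, since $N\in\mathcal A$; the convention $0\cdot\infty=0$ is harmless if $\rho$ takes the value $\infty$. Hence preimages of balls under $\widehat G$ are in $\mathcal A$. The separability step of that proof then applies verbatim to $\widehat G$. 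Take a countable dense $(f_k)$ in $R:=\widehat G(\X)$. Every open $O$ satisfies $O\cap R=\bigcup R\cap B(f_k,q)$ over rational $q$ with $B(f_k,q)\subseteq O$. To see this, for $g\in O\cap R$ with $B(g,2r)\subseteq O$, choose $f_k$ with $\widetilde d(f_k,g)<q<r$ rational. Therefore $\widehat G^{-1}(O)\in\mathcal A$, and $\widehat G$ is Borel measurable. Theorem~\ref{thm:pettis-metric}, with the null set $\varnothing$, then shows that $\widehat G$ is strongly measurable with respect to $\mu$.

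\textbf{Conclusions.} The simple approximants of $\widehat G$ converge to $\widehat G=\widetilde G$ outside a null set. Hence $\widetilde G$ is strongly measurable in the almost-everywhere sense of Definition~\ref{def:measurability}(iii). If $\mu$ is complete, then for Borel $B$,
\[
\widetilde G^{-1}(B)=\bigl(\widehat G^{-1}(B)\setminus N\bigr)\cup\bigl(\widetilde G^{-1}(B)\cap N\bigr),
\]
and the second set is a subset of the null set $N$, hence lies in $\mathcal A$. So $\widetilde G$ is Borel measurable.

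The only delicate point is that measurability of ball preimages must hold for the \emph{modified} map. This is why the modification uses a constant value on the measurable set $N$, together with the ball-measurability of $\widetilde G$ itself, and does not invoke the theorem directly for $\widehat G$. The theorem would not apply directly, because $\widehat G$ need not be of the form $G'(x,\cdot)$ with $G'$ satisfying (i); in fact it is, with $G'(x,y)=\1_{N^c}(x)G(x,y)+\1_N(x)G(x_0,y)$, which gives an alternative route through Theorem~\ref{thm:measurable-function-valued} as stated.
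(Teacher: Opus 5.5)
Your argument is correct and is essentially the paper's: redefine $\widetilde G$ on the null set $N$ as a fixed value, observe that the modified map has separable range and measurable ball preimages, and conclude Borel and strong measurability via Theorem~\ref{thm:pettis-metric}; then transfer back almost everywhere, using completeness for the last claim. The only difference is that the paper applies Theorem~\ref{thm:measurable-function-valued} directly to the modified field, since its scalar evaluations $x\mapsto \widehat G(x)(y)$ are still $\mathcal A$-measurable. That is exactly the ``alternative route'' in your last paragraph, so your preceding claim that the theorem would not apply directly is mistaken, and re-deriving its proof by hand is unnecessary.
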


\begin{proof}
Choose $N\in\mathcal A$ with $\mu(N)=0$ such that
$\widetilde G(\mathbb X\setminus N)$ is separable. If $\mathbb X$ is empty,
the conclusion is immediate. Otherwise, fix
$f_0\in\mathcal F(\Y)$ and define $H=\widetilde G$ on
$\mathbb X\setminus N$ and $H=f_0$ on $N$. The scalar maps
$x\mapsto H(x)(y)$ remain measurable for every $y\in\mathbb Y$, and the range
of $H$ is separable. Theorem~\ref{thm:measurable-function-valued} therefore
implies that $H$ is measurable and strongly measurable. Since
$H=\widetilde G$ almost everywhere, the remaining assertions follow.
\end{proof}

\begin{corollary}[Lipschitz-valued range verification]
\label{cor:lipschitz-range-verification}
Let $(\mathbb Y,d)$ be a nonempty metric space, let
$\mathcal F(\Y)\subseteq\operatorname{Lip}_b(\Y)$, and equip
$\mathcal F(\Y)$ with
\[
\|f\|_{\mathrm{Lip}}
:=
\sup_{y\in\Y}|f(y)|
+
\sup_{y\neq z}\frac{|f(y)-f(z)|}{d(y,z)}.
\]
Here the second supremum is understood to be zero if $\Y$ has only one point.
Let $(\mathbb X,\mathcal A,\mu)$ be a measure space and suppose that
$G\colon\mathbb X\times\mathbb Y\to\R$ satisfies the two pointwise assumptions
of Theorem~\ref{thm:measurable-function-valued}. If the associated map
$\widetilde G$ has essentially separable range in
$\|\cdot\|_{\mathrm{Lip}}$, then $\widetilde G$ is strongly measurable.
\end{corollary}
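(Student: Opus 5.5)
The obstacle is that $\|\cdot\|_{\mathrm{Lip}}$ on $\operatorname{Lip}_b(\Y)$ need not be determined by countably many evaluations when $\Y$ is non-separable. So we cannot feed $\mathcal F(\Y)$ directly into Theorem~\ref{thm:measurable-function-valued}. The plan is to rerun its proof, using countably many evaluations only on the separable part of the range that actually matters.

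\textbf{Step 1 (reduce to separable range).} Choose $N\in\mathcal A$ with $\mu(N)=0$ such that $R_0:=\widetilde G(\X\setminus N)$ is separable. Fix a countable dense set $\{f_k\}_{k\ge1}\subseteq R_0$; if $R_0$ is empty the claim is trivial. As in Corollary~\ref{cor:essential-range-verification}, define $H=\widetilde G$ on $\X\setminus N$ and $H=f_1$ on $N$. Then $x\mapsto H(x)(y)$ is $\mathcal A$-measurable for every $y\in\Y$, the range $R$ of $H$ is contained in $R_0$ (hence separable), and $\{f_k\}$ is dense in $R$. Since $H=\widetilde G$ almost everywhere, it suffices to show that $H$ is Borel measurable. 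Theorem~\ref{thm:pettis-metric} then gives strong measurability of $H$, hence of $\widetilde G$.

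\textbf{Step 2 (countable evaluation set).} For each of the countably many functions $h_{jk}:=f_j-f_k$ and each $m\in\N$, choose a point $y\in\Y$ with $|h_{jk}(y)|>\sup|h_{jk}|-1/m$. Also choose a pair $y\ne z$ whose difference quotient of $h_{jk}$ exceeds its Lipschitz constant minus $1/m$, whenever that constant is positive. Let $D\subseteq\Y$ be the countable set of all chosen points, and let $P\subseteq D\times D$ be the countable set of chosen pairs. Define the seminorm
\[
q(f):=\sup_{y\in D}|f(y)|+\sup_{(y,z)\in P}\frac{|f(y)-f(z)|}{d(y,z)}.
\]
Then $q\le\|\cdot\|_{\mathrm{Lip}}$, and $q(h_{jk})=\|h_{jk}\|_{\mathrm{Lip}}$ for all $j,k$.

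\textbf{Step 3 (equality on $R$; the main point).} Fix $k$ and $g\in R$, and take $f_{j_i}\to g$ in $\|\cdot\|_{\mathrm{Lip}}$. The maps $g\mapsto q(g-f_k)$ and $g\mapsto\|g-f_k\|_{\mathrm{Lip}}$ are both $1$-Lipschitz with respect to $\|\cdot\|_{\mathrm{Lip}}$, because $q\le\|\cdot\|_{\mathrm{Lip}}$. They agree at each $f_{j_i}$, so they agree at $g$, giving $\|g-f_k\|_{\mathrm{Lip}}=q(g-f_k)$. Consequently
\[
x\mapsto\|H(x)-f_k\|_{\mathrm{Lip}}
\]
is a countable supremum (plus a countable supremum) of $\mathcal A$-measurable functions of the evaluations $H(x)(y)$, $y\in D$. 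It is therefore measurable, and $H^{-1}(B(f_k,r))\in\mathcal A$ for all $k$ and $r>0$.

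\textbf{Step 4 (conclude as in Theorem~\ref{thm:measurable-function-valued}).} For an open $O\subseteq\mathcal F(\Y)$, write $O\cap R$ as a countable union of the sets $R\cap B(f_k,q)$, with $q$ rational and $\overline{B(f_k,q)}\subseteq O$. Since $H$ takes values in $R$, this expresses $H^{-1}(O)$ as a countable union of sets in $\mathcal A$, so $H$ is Borel measurable. Its range is separable, so Theorem~\ref{thm:pettis-metric} applies.

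The delicate step is Step~3: the choice of $D$ and $P$ must make $q$ reproduce the Lipschitz norm on all differences $g-f_k$ with $g\in R$, not only on the countable dense family. The density-plus-continuity argument above is what handles this.
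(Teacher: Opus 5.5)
Your proof is correct and follows essentially the same route as the paper: choose countably many evaluation points that nearly attain the two suprema defining $\|\cdot\|_{\mathrm{Lip}}$ for a countable dense family of differences, extend the resulting equality of norms by density and continuity, and then run the ball-preimage argument of Theorem~\ref{thm:measurable-function-valued} on the null-set modification. The differences are minor. The paper takes a dense subset of $R-R$ and all pairs in $D$, and then invokes the theorem with $R$ in place of $\mathcal F(\Y)$; you use only the differences $f_j-f_k$ and the chosen pairs, and redo the ball argument directly, which also spells out the density step that the paper states only briefly.
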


\begin{proof}
Outside a null set, let the range of $\widetilde G$ be contained in a separable
set $R$, and replace $R$ by its closure. If $R$ is empty, the conclusion is
immediate. Otherwise, the difference set
$R-R:=\{f-g:f,g\in R\}$ is separable in $\|\cdot\|_{\mathrm{Lip}}$. Choose a
countable dense subset $(h_j)_{j\geq1}$ of $R-R$. For every $j,m\geq1$, choose
points that approximate each nonempty supremum defining
$\|h_j\|_{\mathrm{Lip}}$ within $1/m$, and let $D\subseteq\Y$ be the union of
all points so chosen. If necessary, add one point of $\Y$ to $D$. Then $D$ is
countable and, by density of the $h_j$,
\[
\|h\|_{\mathrm{Lip}}
=
\sup_{y\in D}|h(y)|
+
\sup_{\substack{y,z\in D\\y\neq z}}
\frac{|h(y)-h(z)|}{d(y,z)}
\qquad (h\in R-R).
\]
After enumerating $D$, the right-hand side is a Borel function of the two
evaluation sequences. Thus the metric on $R$ is reconstructed from countably
many evaluations. Modifying $\widetilde G$ on the null set to take one fixed
value in $R$, Theorem~\ref{thm:measurable-function-valued}, with $R$ in place
of $\mathcal F(\Y)$, yields a strongly measurable modification. Hence
$\widetilde G$ is strongly measurable.
\end{proof}

Example~\ref{ex:not-measurable} shows why the separability step cannot simply be
omitted: measurable inverse images of all balls need not imply Borel
measurability in a non-separable target. The results above isolate the useful
replacement in applications: countably many measurable evaluations, together
with separability of the range, yield measurability and strong measurability.

%%%%%%%%%%%%%%%%%%%%%%%%%%%%%%%%%%%%%%%%%%%%%%%%%%%%%%%%%%%%%%
%%%%%%%%%%%%%%%%%%%%%%%%%%%%%%%%%%%%%%%%%%%%%%%%%%%%%%%%%%%%%%

\section{Adapted Approximations}\label{sect:applications}
We now address the second main objective of this paper: adapted approximation.
Using the strong-measurability criteria and the approximation lemmas established
above, we construct elementary approximations of metric-space-valued stochastic
processes, obtain approximation in the full Lipschitz norm for function-valued
processes under the stated function-space hypotheses, and develop smooth
approximations based on finitely many past observations.

\subsection[Elementary adapted approximations]{{Elementary adapted approximations}}

We first recall some standard definitions and facts
about stochastic processes and the associated $\sigma$-algebras, and then we
present the approximation results.
 
Let $(\Omega,\F,(\F_t)_{t \in [0,T]},\mathbb P)$ be a filtered probability space. We say that $(\F_t)_{t \in [0,T]}$ satisfies the \emph{usual
  conditions} if it is right-continuous and $\F_0$ contains all $\mathbb P$-null sets of $\F$.
We recall a few standard $\sigma$-algebras on $[0,T] \times \Omega$ (see \cite{KaratzasShreve}).

\begin{definition}
  The following $\sigma$-algebras on $[0,T] \times \Omega$ are defined:
  \begin{itemize}
    \item The \emph{adapted} $\sigma$-algebra $\mathcal{A}$ is
          \[
            \mathcal{A} :=
            \bigl\{
            A \in \B([0,T]) \otimes \F :
            A_t := \{\omega \in \Omega : (t,\omega) \in A\} \in \F_t \;\forall t \in [0,T]
            \bigr\}.
          \]
    \item The \emph{progressive} $\sigma$-algebra $\mathrm{Prog}$ is
          \[
            \mathrm{Prog} :=
            \bigl\{
            A \in \B([0,T]) \otimes \F :
            A \cap ([0,t] \times \Omega) \in \B([0,t]) \otimes \F_t
            \;\forall t \in [0,T]
            \bigr\}.
          \]
    \item The \emph{predictable} $\sigma$-algebra $\mathcal{P}$ is the smallest
          $\sigma$-algebra on $[0,T] \times \Omega$ with respect to which all
          left-continuous adapted processes are measurable.
  \end{itemize}
\end{definition}
\noindent
A process $X\colon [0,T] \times \Omega \to \R$ is said to be
\begin{itemize}
  \item measurable if it is $\B([0,T]) \otimes \F$-measurable.
  \item jointly measurable and adapted if it is $\mathcal{A}$-measurable. In other words,
        it is measurable, and $X_t$ is $\F_t$-measurable for each $t$.
  \item progressive if it is measurable with respect to $\mathrm{Prog}$; and
  \item predictable if it is $\mathcal{P}$-measurable.
\end{itemize}

We begin by recalling the notion of an elementary process.
\begin{definition}[Elementary process]
  A real-valued process $H\colon [0,T] \times \Omega \to \R$ is called an
  \emph{elementary predictable process} if there exists a partition
  $0 = t_0 < t_1 < \dots < t_n = T$ of $[0,T]$ and bounded random variables
  $\xi_0,\xi_1,\dots,\xi_{n-1}$ such that:
  \begin{itemize}
    \item for each $i=0,\dots,n-1$, $\xi_i$ is $\F_{t_i}$-measurable; and
    \item the process $H$ is given by
          \begin{equation}\label{eq:elementary}
            H(\omega,t) = \xi_0(\omega)\,\1_{\{0\}}(t)
            + \sum_{i=0}^{n-1} \xi_i(\omega)\,\1_{(t_i,t_{i+1}]}(t).
          \end{equation}
  \end{itemize}
  In the sequel we simply refer to such processes as
  \emph{elementary processes}.
\end{definition}

In Karatzas and Shreve~\cite[Lemma~3.2.4]{KaratzasShreve}, it is proved that if the
filtration $(\F_t)_{t \in [0,T]}$ satisfies the usual conditions, then every bounded,
jointly measurable adapted process can be approximated in $L^2([0,T] \times \Omega)$ by elementary
predictable processes. A careful reading of the proof shows that:

\begin{itemize}
  \item the usual conditions on $(\F_t)_{t \in [0,T]}$ are not needed if one assumes the
        process is progressively measurable rather than jointly measurable and adapted; and
  \item  {the approximation extends to $L^p$ for every $1\le p<\infty$.}
\end{itemize}

We summarise this in the following theorem with proof omitted.
\begin{theorem}\label{thm:elem-approx}
  Let $(\Omega,\F,(\F_t)_{t \in [0,T]},\mathbb P)$ be a filtered probability space, and
  let \textcolor{blue}{$1\le p<\infty$}. Then:
  \begin{enumerate}[(a)]
    \item For every progressively measurable process
          $X \in L^p(\Omega \times [0,T],\,\mathrm{Prog},\mathbb P \otimes \lambda)$ and
          every $\varepsilon>0$, there exists an elementary process $H$
          such that
          \[
            \E \Bigl[ \int_0^T |X(\omega,t) - H(\omega,t)|^p
              \,dt \Bigr] \le \varepsilon.
          \]
    \item If, in addition, $(\F_t)_{t \in [0,T]}$ satisfies the usual conditions, then the same
          conclusion holds for every adapted process
          $X \in L^p(\Omega \times [0,T],\,\mathcal{A},\mathbb P \otimes \lambda)$.
  \end{enumerate}
\end{theorem}

% {\color{blue}
% \begin{proof}
% First truncate $X$; the truncations converge in $L^p(\mathbb P\otimes\lambda)$.
% For a bounded progressive process, the backward averages
% \[
%   X_t^\delta:=\frac1\delta\int_{(t-\delta)^+}^t X_s\,ds
% \]
% are bounded continuous adapted processes and converge to $X$ in this $L^p$
% space by Lebesgue differentiation and bounded convergence. For each fixed
% $\delta$, their left-endpoint discretizations are elementary processes and
% converge in $L^p$ by continuity and bounded convergence. This proves (a).
% For (b), apply the cited lemma of Karatzas and Shreve to bounded $X$, and clip
% the elementary $L^2$ approximants to the same bound $M>0$ as $X$. Their $L^p$
% convergence follows from H\"older's inequality when $1\le p<2$, and from
% $|X-H|^p\le(2M)^{p-2}|X-H|^2$ when $p\ge2$. Finally, remove the truncation.
% \end{proof}
% }

\begin{remark}\label{rem:elementary-indicator}
  For every $E$ in one of the $\sigma$-algebras
  $\mathcal{M}=\mathrm{Prog}$ or $\mathcal{M}=\mathcal{A}$ (according to whether
  the usual conditions hold), and every $\varepsilon>0$, there exists an
  elementary process $H$ of the form \eqref{eq:elementary} such that
  \[
    \E \Bigl[ \int_0^T |\1_E(\omega,t) - H(\omega,t)|^p \,dt \Bigr]
    \le \varepsilon/2^p.
  \]
  However, for our purposes
  we need the coefficients $\xi_i$, in its expression like \eqref{eq:elementary}, to be indicator functions of events in
  $\F_{t_i}$, rather than arbitrary $\F_{t_i}$-measurable random variables. For this,
  we can threshold the $\xi_i$'s to get processes of the form
  \[
    \widetilde{H}(t,\omega) = \1_{\{0\}}(t)\,\1_{A_0}(\omega) + \sum_{i=0}^{n-1} \1_{(t_i,t_{i+1}]}(t)\,\1_{A_i}(\omega),
  \]
  where $A_i = \{\omega \in \Omega : \xi_i(\omega) \geq 1/2\}\in \mathcal{F}_{t_i}$.
  Now using the fact that
  \[
    |\1_E(\omega,t) - \widetilde{H}(\omega,t)| \le 2|\1_E(\omega,t) - H(\omega,t)|,
  \]
  we see that
  \[
    \E \Bigl[ \int_0^T |\1_E(\omega,t) - \widetilde{H}(\omega,t)|^p \,dt \Bigr]
    \le \varepsilon.
  \]
\end{remark}

Combining the remark above with Lemma~\ref{lem:approx} and the strong
measurability results, we obtain an $L^p$-approximation theorem for
metric-space-valued stochastic processes.

\begin{theorem}\label{thm:banach-elem-approx}
  Let $(\Omega,\F,(\F_t)_{t \in [0,T]},\mathbb P)$ be a filtered probability
  space, let $(\Y, d)$ be a metric space, and fix $1\le p<\infty$.
  Let $\mathcal{M}$ equal $\mathrm{Prog}$ when
  $(\F_t)_{t \in [0,T]}$ does not necessarily satisfy the usual conditions and $\mathcal{A}$
  when it does. Assume that $\operatorname{dens}(\Y)$ is strictly smaller than
  every real-valued measurable cardinal. Then for every
   {$(\mathcal M,\B_\Y)$-measurable} stochastic process
  \[
    X\colon (\Omega \times [0,T],\mathcal{M}) \to (\Y,\B_\Y)
  \]
  satisfying
  \[
    \E \left[ \int_0^T d(X(\omega,t), y_0)^p \,dt \right] < \infty
    \]
  for some $y_0 \in \Y$,
  and every $\varepsilon>0$, there exists a process $\widetilde{X}$ of the form
  \[
    \widetilde{X}(\omega,t)
    = \1_{\{0\}}(t)\,\Xi_0(\omega) + \sum_{i=0}^{n-1} \1_{(t_i,t_{i+1}]}(t)\,\Xi_i(\omega),
  \]
  where $0 = t_0 < t_1 < \dots < t_{n} = T$ is a partition of $[0,T]$ and, for each
  $i$, we have
  \[
    \Xi_i(\omega) = \sum_{j=1}^{N} y_{i,j}\,\1_{A_{i,j}}(\omega)
  \]
  for some $y_{i,1},\dots,y_{i,N} \in \Y$ and a partition
  $A_{i,1},\dots,A_{i,N} \in \F_{t_i}$ of $\Omega$  {(allowing empty cells)},
  such that
  \[
    \E \Bigl[ \int_0^T d(X(\omega,t), \widetilde{X}(\omega,t))^p \,dt \Bigr]^{1/p}
    \le \varepsilon.
  \]
\end{theorem}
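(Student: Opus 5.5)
The plan has three steps: make $X$ strongly measurable, replace it by a simple function, and replace each indicator in that simple function by an elementary indicator process.

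\emph{Step 1: strong measurability and a simple approximation.} The measure $\mathbb P\otimes\lambda$ restricted to $\mathcal M$ is finite, and $X$ is $(\mathcal M,\B_\Y)$-measurable. By the density assumption, Theorem~\ref{thm:second}(i) applies to $(\Omega\times[0,T],\mathcal M,\mathbb P\otimes\lambda)$, so $X$ is strongly measurable. The $p$-th moment condition then lets us apply Lemma~\ref{lem:approx} with $\varepsilon/2$ in the $L^p$-norm sense. This produces $y_1,\dots,y_K\in\Y$ and an $\mathcal M$-measurable partition $E_1,\dots,E_K$ of $\Omega\times[0,T]$ such that $S:=\sum_k y_k\1_{E_k}$ satisfies $\|d(X,S)\|_{L^p}\le\varepsilon/2$.

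\emph{Step 2: approximate each indicator.} For each $k$, Remark~\ref{rem:elementary-indicator} gives an elementary indicator process $\widetilde H_k$ with $\|\1_{E_k}-\widetilde H_k\|_{L^p}\le\delta$. Here $\delta$ will be fixed below. Passing to a common refinement $0=t_0<\dots<t_n=T$ of the partitions keeps each $\widetilde H_k$ elementary with indicator coefficients $\1_{A^k_i}$, $A^k_i\in\F_{t_i}$, since refinement only repeats coefficients and $\F_{t_i}\subseteq\F_{t_j}$ for $t_i\le t_j$.

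\emph{Step 3: assemble a genuine partition (main obstacle).} The $\widetilde H_k$ need not be disjoint or cover $\Omega$, so $\sum_k y_k\widetilde H_k$ is not meaningful in $\Y$. I will disjointify on each time interval:
\[
A_{i,k}:=A^k_i\setminus\bigcup_{l<k}A^l_i\quad(k\le K),\qquad A_{i,K+1}:=\Omega\setminus\bigcup_{l\le K}A^l_i,
\]
all in $\F_{t_i}$, and set $y_{i,K+1}:=y_1$ and $y_{i,k}:=y_k$. Then $\Xi_i=\sum_j y_{i,j}\1_{A_{i,j}}$ has the required form with $N=K+1$.

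To control the error, put $\widetilde X$ on the same intervals. At a point $(\omega,t)\in E_k$ where $\widetilde X(\omega,t)\neq y_k$, some indicator must be misfit: either $\widetilde H_k=0$ while $\1_{E_k}=1$, or $\widetilde H_l=1$ for some $l<k$ while $\1_{E_l}=0$. Therefore $\widetilde X\ne S$ only on $\bigcup_l\{\widetilde H_l\neq\1_{E_l}\}=:B$, and
\[
(\mathbb P\otimes\lambda)(B)\le\sum_l\|\1_{E_l}-\widetilde H_l\|_{L^1}\le\sum_l\|\1_{E_l}-\widetilde H_l\|_{L^p}^p\le K\delta^p.
\]
On $B$, both $S$ and $\widetilde X$ take values in $\{y_1,\dots,y_K\}$, so $d(S,\widetilde X)\le D:=\max_{k,l}d(y_k,y_l)$. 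Hence
\[
\|d(S,\widetilde X)\|_{L^p}\le D\,(K\delta^p)^{1/p},
\]
which is at most $\varepsilon/2$ once $\delta$ is chosen small, depending only on $K$ and $D$.

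The triangle inequality in $L^p$ then gives $\|d(X,\widetilde X)\|_{L^p}\le\varepsilon$. The points needing care are the index bookkeeping in the disjointification and the fact that the time point $\{0\}$ has $\lambda$-measure zero, so $A_{0,\cdot}$ is harmless there.
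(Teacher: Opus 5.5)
Your proposal is correct and follows essentially the same route as the paper. You get strong measurability from Theorem~\ref{thm:second}(i), take a simple approximation from Lemma~\ref{lem:approx}, approximate each indicator by an elementary indicator process via Remark~\ref{rem:elementary-indicator}, and disjointify on each time interval, bounding the distance by $D$ on the set where some indicator misfits. The differences are cosmetic: you use a separate leftover cell and bound the measure of the misfit set, where the paper uses the pointwise bound $D\sum_k|\1_{E_k}-\widetilde H_k|$. You also explicitly justify passing to a common refinement of the partitions, which the paper takes for granted.
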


\begin{proof}
  By Lemma~\ref{lem:approx} and Theorem~\ref{thm:second}(i), every measurable
  metric-space-valued process as above is strongly measurable
  and can be approximated by simple processes of the form
  \begin{equation}\label{eq:simple-process}
    S(\omega,t)
    = \sum_{k=1}^N y_k \,\1_{E_k}(\omega,t),
  \end{equation}
  with $E_1,\dots,E_N \in \mathcal{M}$ forming a partition of $\Omega\times [0,T]$, and $y_k \in \Y$ in the sense that
  \[
    \E \Bigl[ \int_0^T d(X(\omega,t), S(\omega,t))^p \,dt \Bigr]^{1/p} \le \varepsilon/2.
  \]
  Fix $D > \max_{1 \le k, l \le N} d(y_k, y_l)$.
  For each indicator process $\1_{E_k}$, we apply Remark~\ref{rem:elementary-indicator}
  to approximate it in $L^p(\Omega \times [0,T], \mathcal{M}, \mathbb P \otimes \lambda)$
  up to an $L^p$-norm error of $\varepsilon/(2ND)$ by an elementary indicator process
  \[
    \widetilde{H}_k =
    \1_{\{0\}}(t)\,\1_{A_{k,0}}(\omega) + \sum_{i=0}^{n-1} \1_{(t_i,t_{i+1}]}(t)\,\1_{A_{k,i}}(\omega),
  \]
  where the partition $0 = t_0 < t_1 < \dots < t_{n} = T$ is the same for all $k$.
  For each $i$, turn the sets $A_{k,i}$ into a measurable partition by setting
  \[ 
    B_{k,i}:=A_{k,i}\setminus\bigcup_{\ell<k}A_{\ell,i}\quad (k<N),
    \qquad B_{N,i}:=\Omega\setminus\bigcup_{k<N}A_{k,i}.
  \]
  Now define \[
    \widetilde{X}(\omega,t) := y_k
  \]
  whenever $\omega \in B_{k,i}$ where $i$ is such that $t \in (t_i,t_{i+1}]$
  or $i=0$ if $t=0$.
  If all the indicators $\widetilde H_k(\omega,t)$ agree with
  $\1_{E_k}(\omega,t)$, the selected value equals $S(\omega,t)$; otherwise the
  distance is at most $D$. Consequently,
  \[
    d(S(\omega,t),\widetilde X(\omega,t))
    \le D\sum_{k=1}^N|\1_{E_k}(\omega,t)-\widetilde H_k(\omega,t)|,
  \]
  so its $L^p$ norm is at most $\varepsilon/2$.
  Thus $\widetilde{X}$ is of the desired form and
  \begin{align*}
    \E \Bigl[ \int_0^T d(X(\omega,t), \widetilde{X}(\omega,t))^p \,dt \Bigr]^{1/p}
     & \le  \E \Bigl[ \int_0^T d(X(\omega,t), S(\omega,t))^p \,dt \Bigr]^{1/p}                \\
     & \quad + \E \Bigl[ \int_0^T d(S(\omega,t), \widetilde{X}(\omega,t))^p \,dt \Bigr]^{1/p} \\
     & \le \frac{\varepsilon}{2} + \frac{\varepsilon}{2} = \varepsilon.
  \end{align*}
\end{proof}

\subsection[Lipschitz-valued adapted approximations]{ {Lipschitz-valued adapted approximations}}
A particularly important class of metric spaces for our purposes is provided by
Wasserstein spaces. These spaces arise naturally in mean field control problems
and mean field games, where probability measures appear as state variables; see \cite{bensoussan2013mean,cardaliaguet2019master,carmona2018probabilistic}. We
therefore recall the definition of Wasserstein spaces and the associated
Wasserstein distances. For more details, see, for example, Villani~\cite{Villani2009}
or Ambrosio, Gigli, and Savar\'e~\cite{AmbrosioGigliSavare2008}.

\begin{definition}[Wasserstein spaces and Wasserstein distances]
  Let $(\mathbb S,\rho)$ be a metric space, let $q\in[1,\infty)$, and let
  $\mathcal P(\mathbb S)$ denote the set of Radon probability measures on
  $\mathbb S$, that is, Borel probability measures that are inner regular
  by compact sets. On Polish spaces, every Borel probability measure is
  Radon. The \emph{$q$-Wasserstein space} over $\mathbb S$ is defined by
  \[
    \mathcal P_q(\mathbb S)
    :=
    \left\{
      \mu\in\mathcal P(\mathbb S):
      \int_{\mathbb S}\rho(x,x_0)^q\,\mu(dx)<\infty
      \text{ for some, equivalently every, } x_0\in\mathbb S
    \right\}.
  \]
  For $\mu,\nu\in\mathcal P_q(\mathbb S)$, the \emph{$q$-Wasserstein distance}
  between $\mu$ and $\nu$ is given by
  \[
    W_q(\mu,\nu)
    :=
    \left(
      \inf_{\pi\in\Pi(\mu,\nu)}
      \int_{\mathbb S\times\mathbb S}\rho(x,y)^q\,\pi(dx,dy)
    \right)^{1/q},
  \]
  where $\Pi(\mu,\nu)$ denotes the set of all couplings of $\mu$ and $\nu$; that
  is, the set of Radon probability measures on $\mathbb S\times\mathbb S$ whose first
  and second marginals are $\mu$ and $\nu$, respectively.
\end{definition}

The function-valued consequence of Theorem~\ref{thm:banach-elem-approx} can
be stated independently of the number of parameter variables.

\begin{corollary}[Lipschitz-valued adapted approximation]
\label{cor:lip-field-approx}
Let $(\Omega,\F,(\F_t)_{t\in[0,T]},\mathbb P)$ be a filtered probability
space, with $\mathcal M=\mathcal A$ under the usual conditions and
$\mathcal M=\mathrm{Prog}$ otherwise. Let $(\Y,d)$ be a nonempty metric space
and let $\mathcal F_L(\Y)$ be the bounded real-valued Lipschitz functions on
$\Y$ with Lipschitz constant at most $L>0$, equipped with the full Lipschitz
norm of Corollary~\ref{cor:lipschitz-range-verification}.
Suppose $G\colon\Omega\times[0,T]\times\Y\to\R$ is uniformly bounded,
$G(\cdot,\cdot,y)$ is $\mathcal M$-measurable for every $y$, and its sections
belong to $\mathcal F_L(\Y)$ outside a common $\mathcal M$-measurable
$\mathbb P\otimes\lambda$-null set. Set $G=0$ on that set and assume that
$\widetilde G(\omega,t)=G(\omega,t,\cdot)$ satisfies either:
\begin{enumerate}[(a)]
  \item $\widetilde G$ is $\mathcal M/\mathcal B_{\mathcal F_L(\Y)}$-measurable
        and $\operatorname{dens}(\mathcal F_L(\Y))$ is strictly smaller than
        every real-valued measurable cardinal;
  \item $\widetilde G$ has essentially separable range in
        $\|\cdot\|_{\mathrm{Lip}}$.
\end{enumerate}
Then, for every $1\le p<\infty$ and $\varepsilon>0$, there is an elementary
function-valued process $\bar G$ of the form in
Theorem~\ref{thm:banach-elem-approx}, with deterministic values in
$\mathcal F_L(\Y)$, such that
\[
  \left(\E\int_0^T
  \|\bar G(\omega,t,\cdot)-G(\omega,t,\cdot)\|_{\mathrm{Lip}}^p\,dt
  \right)^{1/p}\le\varepsilon.
\]
If, in addition, $Q_t\colon\Y\to\Y$ are nonexpansive projections with
$t\mapsto Q_ty$ continuous for every $y$, and
$G(\omega,t,\cdot)\circ Q_t=G(\omega,t,\cdot)$ outside a common
$\mathcal M$-measurable null set, then
\[
  \widehat G(\omega,t,\cdot):=\bar G(\omega,t,\cdot)\circ Q_t
\]
is invariant under $Q_t$, is progressively measurable for each fixed $y$,
and satisfies the same $L^p$ error bound in the full Lipschitz norm.
The stopped error norm is measurable for the completed product measure;
$\widehat G$ need not be elementary as a function-valued process.
\end{corollary}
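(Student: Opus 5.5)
Our plan is to reduce the statement to Theorem~\ref{thm:banach-elem-approx} applied to the lifted map $\widetilde G\colon(\Omega\times[0,T],\mathcal M)\to(\mathcal F_L(\Y),\|\cdot\|_{\mathrm{Lip}})$, with target metric $\widetilde d(f,g)=\|f-g\|_{\mathrm{Lip}}$ and reference point $y_0:=0\in\mathcal F_L(\Y)$. The moment condition is immediate: if $|G|\le M$, then $\|\widetilde G(\omega,t)\|_{\mathrm{Lip}}\le M+L$ everywhere (on the null set we set $G=0$), so $\E\int_0^T\|\widetilde G\|_{\mathrm{Lip}}^p\,dt\le T(M+L)^p<\infty$. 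What remains is to supply strong measurability, which the proof of Theorem~\ref{thm:banach-elem-approx} actually uses; its density hypothesis is only a route to it via Theorem~\ref{thm:second}(i).

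In case (a), $\widetilde G$ is Borel measurable by assumption, and $\mathbb P\otimes\lambda$ is finite. Theorem~\ref{thm:second}(i), applied with target $\mathcal F_L(\Y)$, therefore gives strong measurability, and Theorem~\ref{thm:banach-elem-approx} applies verbatim. In case (b), we use Corollary~\ref{cor:lipschitz-range-verification}, whose pointwise hypotheses hold because each $G(\cdot,\cdot,y)$ is $\mathcal M$-measurable and the sections lie in $\mathcal F_L(\Y)\subseteq\operatorname{Lip}_b(\Y)$. Its proof yields a Borel measurable modification $H$ with separable range that agrees with $\widetilde G$ off an $\mathcal M$-measurable null set. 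Since the range of $H$ is separable, Theorem~\ref{thm:pettis-metric} makes $H$ strongly measurable. We then rerun the proof of Theorem~\ref{thm:banach-elem-approx}, that is, Lemma~\ref{lem:approx} followed by Remark~\ref{rem:elementary-indicator}, for $H$; this avoids any cardinal hypothesis. The $L^p$ error is unchanged by the null-set modification.

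One point needs care: the values $y_{i,j}$ produced by the construction must lie in $\mathcal F_L(\Y)$. They do. In Lemma~\ref{lem:approx} the simple approximants take values in the countable dense set $\{y_n\}$ of the range (Theorem~\ref{thm:pettis-metric}) or equal $y_0=0$. Theorem~\ref{thm:banach-elem-approx} then only reassigns these same values. So $\bar G$ is elementary with deterministic values in $\mathcal F_L(\Y)$, which gives the main bound.

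For the projection part, set $\widehat G(\omega,t,\cdot)=\bar G(\omega,t,\cdot)\circ Q_t$. Invariance follows from $Q_t\circ Q_t=Q_t$. Measurability for fixed $y$ holds because $\bar G(\omega,t,Q_ty)=\sum_{i,j}\1_{(t_i,t_{i+1}]}(t)\1_{A_{i,j}}(\omega)f_{i,j}(Q_ty)$ (plus the $t=0$ term), where each $t\mapsto f_{i,j}(Q_ty)$ is continuous because $f_{i,j}$ is Lipschitz and $t\mapsto Q_ty$ is continuous. Each summand is therefore a left-continuous-in-$t$ piece times an $\F_{t_i}$-indicator on $(t_i,t_{i+1}]$, hence progressive.

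For the error, off the null set we write $\widehat G-G=(\bar G-G)\circ Q_t$, using $G\circ Q_t=G$. Since $Q_t$ is nonexpansive, composition with $Q_t$ does not increase either the sup norm or the Lipschitz seminorm, so $\|(\bar G-G)\circ Q_t\|_{\mathrm{Lip}}\le\|\bar G-G\|_{\mathrm{Lip}}$ and the same $\varepsilon$ bound follows. The expected obstacle is the measurability of $(\omega,t)\mapsto\|\widehat G-G\|_{\mathrm{Lip}}$. Our plan is to dominate it by the measurable function $\|\bar G-G\|_{\mathrm{Lip}}$, which is measurable because $\bar G$ and $H$ are Borel measurable. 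Measurability of the error itself for the completed measure we would obtain from a countable-evaluation representation in the spirit of Corollary~\ref{cor:lipschitz-range-verification}. Failing that, we would read the integral as an upper integral, which is all the stated bound requires.
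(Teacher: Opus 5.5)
Your treatment of the main $L^p$ bound matches the paper's argument, as do the invariance under $Q_t$, the progressive measurability of $\widehat G(\cdot,\cdot,y)$, and the pointwise inequality $\|\widehat G-G\|_{\mathrm{Lip}}\le\|\bar G-G\|_{\mathrm{Lip}}$. In case (b) the paper simply applies Theorem~\ref{thm:banach-elem-approx} with the separable range together with $0$ as target; this target has countable density, so there is no need to rerun the proof. Your concern about the values $y_{i,j}$ is automatic, since the target is $\mathcal F_L(\Y)$ or a closed subset of it. The gap is the final assertion of the statement: the stopped error norm $(\omega,t)\mapsto\|\widehat G(\omega,t,\cdot)-G(\omega,t,\cdot)\|_{\mathrm{Lip}}$ is measurable for the completed product measure. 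You leave this as a plan, and neither branch establishes it. The upper-integral fallback yields the inequality but not the measurability the corollary claims. The countable-evaluation route is not carried out, and Corollary~\ref{cor:lipschitz-range-verification} does not supply it. That corollary gives one countable set of points for one separable family, whereas here the points would have to compute $\|h\circ Q_t\|_{\mathrm{Lip}}$ for uncountably many $t$ at once. The family $\{h\circ Q_t\}$ need not be separable in the Lipschitz norm.

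The missing idea is lower semicontinuity in time. Fix $h\in\operatorname{Lip}_b(\Y)$. The maps $t\mapsto|h(Q_ty)|$ and $t\mapsto|h(Q_ty)-h(Q_tz)|/d(y,z)$ are continuous, because $h$ is continuous and $t\mapsto Q_ty$ is continuous. Hence $t\mapsto\|h\circ Q_t\|_{\mathrm{Lip}}$ is a sum of two suprema of continuous functions, so it is lower semicontinuous and Borel. Consequently, for a simple map $S=\sum_k h_k\1_{E_k}$ with $E_k\in\mathcal M$, the function $(\omega,t)\mapsto\|S(\omega,t)\circ Q_t\|_{\mathrm{Lip}}=\sum_k\1_{E_k}(\omega,t)\|h_k\circ Q_t\|_{\mathrm{Lip}}$ is measurable. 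The difference $\bar G-G$ is strongly measurable, since $\bar G$ is simple and $G$ has a strongly measurable version. Approximate it almost everywhere by simple maps $S_n$ into $\operatorname{Lip}_b(\Y)$. By the reverse triangle inequality and nonexpansivity, $\bigl|\|S_n\circ Q_t\|_{\mathrm{Lip}}-\|(\bar G-G)\circ Q_t\|_{\mathrm{Lip}}\bigr|\le\|S_n-(\bar G-G)\|_{\mathrm{Lip}}$. Thus the stopped norm of $\bar G-G$ is an almost-everywhere limit of measurable functions, hence measurable for the completion. By the invariance $G\circ Q_t=G$, it agrees almost everywhere with $\|\widehat G-G\|_{\mathrm{Lip}}$. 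Only then can your pointwise inequality be integrated as an ordinary integral.
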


\begin{proof}
Theorem~\ref{thm:second} under (a), or
Corollary~\ref{cor:lipschitz-range-verification} under (b), gives a measurable,
strongly measurable version of $\widetilde G$.
Apply Theorem~\ref{thm:banach-elem-approx} directly under (a), and to this
version with its separable range (together with zero) as target under (b).
Uniform boundedness and the bound $L$ ensure integrability for every finite $p$.
For the stopped process, each deterministic coefficient $h(Q_ty)$ is
continuous in $t$, and the elementary indicator processes are predictable;
hence $\widehat G(\cdot,\cdot,y)$ is progressive. Since $Q_t^2=Q_t$,
$\widehat G(\omega,t,\cdot)\circ Q_t=\widehat G(\omega,t,\cdot)$.
Nonexpansivity and the invariance of $G$ give, almost everywhere,
\[
  \|\widehat G(\omega,t,\cdot)-G(\omega,t,\cdot)\|_{\mathrm{Lip}}
  \le \|\bar G(\omega,t,\cdot)-G(\omega,t,\cdot)\|_{\mathrm{Lip}}.
\]
For fixed $h\in\operatorname{Lip}_b(\Y)$,
$t\mapsto\|h\circ Q_t\|_{\mathrm{Lip}}$ is lower semicontinuous: both
suprema defining the norm are over continuous functions of $t$.
Approximate the strongly measurable difference $\bar G-G$ by simple
functions and use
$\|(h-h')\circ Q_t\|_{\mathrm{Lip}}\le\|h-h'\|_{\mathrm{Lip}}$.
The stopped error norm is therefore measurable for the completed product
measure, so the displayed inequality can be integrated.
\end{proof}

% {\color{blue}
% Approximation in the full Lipschitz norm requires more than pointwise
% measurability and uniform Lipschitz bounds. For example, on
% $[0,1]\times[0,1]$, the deterministic field $G(t,y)=|y-t|$ is bounded,
% jointly continuous, and uniformly $1$-Lipschitz in $y$, but
% \[
%   \operatorname{Lip}\bigl(G(t,\cdot)-G(s,\cdot)\bigr)=2
%   \qquad(t\ne s).
% \]
% Consequently, its lifted range is not essentially separable in the full
% Lipschitz norm with respect to Lebesgue measure in time. Thus pointwise
% measurability and uniform Lipschitz bounds alone do not imply the
% function-space hypotheses of Corollary~\ref{cor:lip-field-approx}.
% }

For the path and Wasserstein setting, take a nonempty metric space
$(\mathbb S,\rho)$, $1\le q<\infty$, and
\[
  \Y=\R^d\times C([0,T];\R^d)\times\mathcal P_q(\R^d)
      \times\mathbb S\times\mathcal P_q(\mathbb S),
\]
with the sum of the Euclidean, supremum, $W_q$, $\rho$, and $W_q$ metrics.
Let $Q_t$ replace the path $\phi$ by $\phi_{\cdot\wedge t}$ and leave all
other coordinates unchanged. These are nonexpansive projections with
continuous orbits. A Lipschitz condition using only the path up to time $t$ reads
\[
  |G(\omega,t,y)-G(\omega,t,y')|\le L\,d(Q_ty,Q_ty')
  \qquad(y,y'\in\Y)
\]
outside a common $\mathcal M$-measurable null set. It implies both the
full-metric Lipschitz bound and invariance under $Q_t$.
For uniformly bounded, pointwise $\mathcal M$-measurable $G$ satisfying either
alternative above, the corollary thus gives
non-anticipative approximations controlling both the supremum norm and the
Lipschitz seminorm of the error.

Here the parameter paths are continuous, which ensures continuity of
$t\mapsto Q_t\phi$ in the supremum metric. For c\`adl\`ag parameter paths,
this continuity can fail at jump times, so the stopped-field conclusion
requires a separate argument. The next subsection allows jumps in the
observation process generating the filtration.

%%%%%%%%%%%%%%%%%%%%%%%%%%%%%%%%%%%%%%%%%
%%%%%%%%%%%%%%%%%%%%%%%%%%%%%%%%%%%%%%%%%

\subsection[Smooth approximations from finitely many past observations]{{Smooth approximations from finitely many past observations}}
We now refine the approximation result of Theorem~\ref{thm:banach-elem-approx}
in the case where the filtration is generated by an underlying stochastic
process.  More precisely, we assume that the information flow is generated by a
real-valued adapted process $U$, so that, for every $t\in[0,T]$,
\[\mathcal F_t=\sigma(U_s:0\le s\le t),\]
or every event in $\mathcal F_t$ agrees almost surely with an event in this
natural filtration. We assume that $U$ has left- or right-continuous paths.
The results below give progressive approximations whose coefficients are smooth
functions of finitely many observations of $U$, including when $U$ has jumps.
They also permit smoothing in time.
Section~\ref{sect:nonmarkovian-control} develops a control application and
discusses related literature.
 
The following lemma concerns approximation of random variables on a probability
space whose $\sigma$-algebra is generated by a countable family of random
variables.

\begin{lemma}\label{lem:smooth-Xn}
  Let $(\Omega,\F,\mathbb P)$ be a probability space such that every event in
  $\F$ agrees almost surely with an event in $\sigma(X_n:n\ge1)$, for a
  countable family of real-valued random variables $\{X_n\}_{n=1}^\infty$.
  Then for every $1\le p<\infty$, every real-valued
  $\xi \in L^p(\Omega,\F,\mathbb P)$, and every $\varepsilon>0$, there
  exist indices $1 \le i_1 < i_2 < \dots < i_m$ and a function
  $f \in C_c^\infty(\R^m)$ such that
  \[
    \bigl( \E\bigl[ |\xi - f(X_{i_1},\dots,X_{i_m})|^p \bigr] \bigr)^{1/p}
    \le \varepsilon.
  \]
  If $0\le\xi\le1$ almost surely, $f$ may also be chosen to satisfy
  $0\le f\le1$.
\end{lemma}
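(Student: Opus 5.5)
We argue by successive reductions. First, truncation: since $\xi\in L^p$, the truncations $\xi_M:=(\xi\wedge M)\vee(-M)$ converge to $\xi$ in $L^p$ by dominated convergence. It therefore suffices to treat bounded $\xi$, say $|\xi|\le M$.

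Second, replace $\mathcal F$ by $\mathcal G:=\sigma(X_n:n\ge1)$. Every $\mathcal F$-event agrees almost surely with a $\mathcal G$-event. Hence $\xi$ is almost surely equal to a $\mathcal G$-measurable bounded variable, namely $\E[\xi\mid\mathcal G]$. To see this, note that $\xi$ is an $L^p$-limit of $\mathcal F$-simple functions, by Lemma~\ref{lem:approx} with $\Y=\R$. Each indicator in such a simple function can be replaced by the indicator of an a.s.\ equal $\mathcal G$-event.

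Third, reduce to finitely many coordinates. Write $\mathcal G_m:=\sigma(X_1,\dots,X_m)$; these increase and generate $\mathcal G$. By the martingale convergence theorem in $L^p$, or equivalently by a $\pi$--$\lambda$ density argument for the algebra $\bigcup_m\mathcal G_m$, we have $\E[\xi\mid\mathcal G_m]\to\xi$ in $L^p$. Fix $m$ with error below $\varepsilon/3$. By Doob--Dynkin, $\E[\xi\mid\mathcal G_m]=g(X_1,\dots,X_m)$ for a Borel function $g\colon\R^m\to[-M,M]$. If $0\le\xi\le1$, then $g$ can be taken with values in $[0,1]$. We use indices $i_k=k$.

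Fourth, smooth. Let $\nu$ be the law of $(X_1,\dots,X_m)$, a finite Borel measure on $\R^m$. Then $C_c^\infty(\R^m)$ is dense in $L^p(\R^m,\nu)$. To see this, first choose a compactly supported continuous $h$ close to $g$ in $L^p(\nu)$, using Lusin's theorem or regularity of finite Borel measures on $\R^m$ together with a cutoff. Then mollify $h$; the mollifications converge to $h$ uniformly on compact sets, and since $\nu$ is finite this gives $L^p(\nu)$ convergence. Throughout, $h$ can be clipped to $[-M,M]$, or to $[0,1]$ in the last assertion. Mollification by a nonnegative kernel of mass one and multiplication by a cutoff $0\le\chi\le1$ both preserve $0\le f\le1$. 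The cutoff step costs little because $\nu(\{|x|>R\})\to0$ and everything is bounded.

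Summing the three errors, each at most $\varepsilon/3$, gives the claim. The main care point is the density of $C_c^\infty$ in $L^p(\nu)$ for an arbitrary finite Borel measure $\nu$ while preserving the bound $0\le f\le1$. This is standard, as sketched above: first use continuous functions via regularity, then mollify.
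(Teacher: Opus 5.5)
Your proposal is correct and follows essentially the same route as the paper: truncation, martingale convergence along $\sigma(X_1,\dots,X_m)$, Doob--Dynkin, Lusin's theorem to reach a bounded compactly supported continuous function, mollification, and clipping to keep $0\le f\le 1$. The only difference is that you also justify why $\xi$ has a $\sigma(X_n:n\ge1)$-measurable version, which the paper simply asserts.
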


\begin{proof}
  First, we may assume without loss of generality that $\xi$ is bounded by some
  constant $C>0$, since bounded functions are dense in $L^p(\Omega,\F,\mathbb P)$.
\\[4pt]
  By assumption, $\xi$ has a $\sigma(X_n:n\ge1)$-measurable version.
  Set $\mathcal{H}_n := \sigma(X_1,\dots,X_n)$ for each $n \in \N$. By the martingale
  convergence theorem, the sequence
  \[
    M_n := \E[\xi \mid \mathcal{H}_n]
  \]
  converges to $\xi$ almost surely as $n \to \infty$. Since the $M_n$ are bounded
  (by $C$), the bounded convergence theorem implies that $M_n \to \xi$ in
  $L^p(\Omega,\F,\mathbb P)$ as $n \to \infty$. Thus we can choose $n$ such that
  \[
    \E[|\xi - M_n|^p]^{1/p} \le (\varepsilon/3).
  \]
  Because $M_n$ is $\mathcal{H}_n$-measurable, there exists a measurable function
  $g\colon (\R^n,\B_{\R^n}) \to (\R,\B_{\R})$ such that
  \[
    M_n = g(X_1,\dots,X_n).
  \]
  Without loss of generality, we may take $|g| \le C$; otherwise we use $(-C)\vee (g \wedge C)$ instead.
  % This is because
  % we can replace $g$ by its truncation at $\pm C$.
  Let $\mathbb P_X$ denote the law of $X := (X_1,\dots,X_n)$ on $\R^n$. By
  \cite[Theorem~7.1.7]{BogachevII}, $\mathbb P_X$ is a regular Borel probability measure
  on $\R^n$. By Lusin's theorem (see, for example, \cite[Theorem~2.24]{Rudin}), for
  every $\delta>0$ there exists a continuous function $\widetilde{g}\colon \R^n
    \to \R$ with compact support such that $|\widetilde{g}| \le C$ and
  \[
    \mathbb P(g(X) \ne \widetilde{g}(X)) < \delta.
  \]
  Choose $\delta>0$ so small that
  \[
    \delta^{1/p} (2C) \le \varepsilon/3,
  \]
  and let $\widetilde{g}$ be as above. Then
  \[
    \left(\int_{\R^n} |g - \widetilde{g}|^p \,d\mathbb P_X\right)^{1/p}
    = \left(\int_{\{g \ne \widetilde{g}\}} |g - \widetilde{g}|^p \,d\mathbb P_X\right)^{1/p}
    \le (2C) \delta^{1/p} \le (\varepsilon/3).
  \]
\\[4pt]
  Next, since $\widetilde{g}$ is continuous with compact support, it is uniformly
  continuous. Hence there exists $\theta>0$ such that
  \[
    \|\mathbf{x} - \mathbf{y}\|_2 \le \theta
    \quad\Longrightarrow\quad
    |\widetilde{g}(\mathbf{x}) - \widetilde{g}(\mathbf{y})| \le \varepsilon/3
  \]
  for all $\mathbf{x},\mathbf{y} \in \R^n$. Let $\eta$ be a standard mollifier
  supported in the Euclidean ball of radius $\theta$ around the origin. Define
  \[
    f(\mathbf{x}) := (\eta * \widetilde{g})(\mathbf{x})
    := \int_{\R^n} \eta(\mathbf{y} - \mathbf{x}) \,\widetilde{g}(\mathbf{y})\,d\mathbf{y}.
  \]
  Then $f \in C_c^\infty(\R^n)$ and, for every $\mathbf{x} \in \R^n$,
  \[
    |f(\mathbf{x}) - \widetilde{g}(\mathbf{x})|
    = \Bigl| \int_{\R^n} \eta(\mathbf{y} - \mathbf{x})
    \bigl( \widetilde{g}(\mathbf{y}) - \widetilde{g}(\mathbf{x}) \bigr)
    \,d\mathbf{y} \Bigr|
    \le \varepsilon/3,
  \]
  since the integrand is non-zero only when $\|\mathbf{y} - \mathbf{x}\|_2 \le
    \theta$.
 
  Consequently,
  \[
    \left(\int_{\R^n} |f - \widetilde{g}|^p \,d\mathbb P_X\right)^{1/p} \le (\varepsilon/3).
  \]
  Combining the estimates and using the triangle inequality, we obtain
  \[
    \E\bigl[|\xi - f(X_1,\dots,X_n)|^p\bigr]^{1/p}
    \le \E[|\xi - M_n|^p]^{1/p}
    + \E[|M_n - \widetilde{g}(X)|^p]^{1/p}
    + \E[|\widetilde{g}(X) - f(X)|^p]^{1/p}
    \le \varepsilon.
  \]
  This proves the claim (with $m=n$ and $i_j=j$). If $0\le\xi\le1$ almost
  surely, choose $0\le g\le1$ and replace the continuous compactly supported
  extension $\widetilde g$ by $0\vee(\widetilde g\wedge1)$. Convolution with
  the nonnegative mollifier then gives $0\le f\le1$, with the same argument.
\end{proof}

\begin{remark}
  Lemma~\ref{lem:smooth-Xn} requires no distributional assumptions on the
  generating random variables, such as absolute continuity with respect to
  Lebesgue measure. In particular, let $U$ be a real-valued adapted process with
  left- or right-continuous paths. On every $[0,t]\subseteq[0,T]$, its
  observations at rational times, together with $U_0$ and $U_t$, generate
  $\sigma(U_s:0\le s\le t)$ by one-sided continuity. If every event in
  $\F_t$ agrees almost surely with an event in this observation
  $\sigma$-algebra, the lemma therefore approximates every real-valued
  $\xi\in L^p(\Omega,\F_t,\mathbb P)$, $1\le p<\infty$, to arbitrary
  $L^p$-accuracy by smooth compactly supported functions of finitely many
  observations up to $t$. These functions can be chosen between $0$ and $1$
  when $0\le\xi\le1$ almost surely. For $t=0$, only $U_0$ is needed.
\end{remark}
 
By combining Lemma~\ref{lem:smooth-Xn} and Theorem~\ref{thm:banach-elem-approx},
we can obtain a smooth approximation theorem for stochastic processes with values
in a metric space. Since the construction involves linear combinations of the
process values, we now restrict the state space to a topological vector space
whose topology is induced by a metric. The relevant background on topological
vector spaces can be found, for example, in \cite{SchaeferWolff1999}.

\begin{theorem}\label{thm:smooth-process-approx}
  Let $(\Omega,\F,(\F_t)_{0\le t\le T},\mathbb P)$ be a filtered probability
  space and let $U$ be a real-valued stochastic process with left- or
  right-continuous paths. Assume that, for every $t\in[0,T]$,
  \[
    \mathcal F_t^U:=\sigma(U_s:0\le s\le t)\subseteq\mathcal F_t,
  \]
  and every event in $\mathcal F_t$ agrees almost surely with an event in
  $\mathcal F_t^U$. In particular, this holds for the natural filtration of
  $U$ and for its completion by null events of $\F$. Let $1\le p<\infty$ and let
   {$\Y$ be a metrizable topological vector space equipped with a
  compatible metric $d$}, whose density is strictly smaller than every
  real-valued measurable cardinal. Then for every
   {$(\mathcal M,\B_\Y)$-measurable} stochastic process
  \[
    X\colon (\Omega \times [0,T],\mathcal{M}) \to (\Y,\B_\Y),
  \]
  satisfying
  \[
    \E\Bigl[ \int_0^T d(X(\omega,t),y_0)^p \,dt \Bigr] < \infty
  \]
  for some $y_0 \in \Y$,
  where $\mathcal{M}=\mathrm{Prog}$, or $\mathcal{M}=\mathcal{A}$ if the
  filtration also satisfies the usual conditions,
  and every $\varepsilon>0$, there exists a process
  $\widetilde{X}$ of the form
  \begin{equation}\label{eq:smooth-process}
    \widetilde{X}(\omega,t)
    = \sum_{j=1}^{N}\1_{\{0\}}(t)\,f_{0,j}(U_0(\omega))\,y_{0,j} +
    \sum_{j=1}^{N}\sum_{i=0}^{n-1} \1_{(t_i,t_{i+1}]}(t)\,
    f_{i,j}(U_{t_0}(\omega),\dots,U_{t_i}(\omega))\,y_{i,j},
  \end{equation}
  where $0 = t_0 < t_1 < \dots < t_{n} = T$ is a partition of $[0,T]$,
  $f_{i,j} \in C_c^\infty(\R^{i+1})$ with $0\le f_{i,j}\le1$ and
  $y_{i,j} \in \Y$ for each $i$ and $j$, such that
  \[
    \E \Bigl[ \int_0^T d(X(\omega,t), \widetilde{X}(\omega,t))^p \,dt \Bigr]^{1/p}
    \le \varepsilon.
  \]
  In addition, with a suitable choice of these coefficients, the same error
  bound holds after omitting the term supported at $\{0\}$ and replacing
  $\1_{(t_i,t_{i+1}]}$, for $i=0,\dots,n-1$, by functions
  $\varphi_i\in C_c^\infty(\R)$ with $0\le\varphi_i\le1$ and
  $\operatorname{supp}(\varphi_i)\subset(t_i,t_{i+1})$ that approximate these
  indicators in $L^p([0,T])$. The resulting process is still progressive.

\end{theorem}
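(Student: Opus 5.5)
We begin from Theorem~\ref{thm:banach-elem-approx}, which applies because $\operatorname{dens}(\Y)$ lies below every real-valued measurable cardinal. For the given $\varepsilon$ it produces an elementary approximation $\widehat X$ with
\[
\widehat X(\omega,t)=\1_{\{0\}}(t)\,\Xi_0(\omega)+\sum_{i=0}^{n-1}\1_{(t_i,t_{i+1}]}(t)\,\Xi_i(\omega),\qquad
\Xi_i=\sum_{j=1}^N y_{i,j}\1_{A_{i,j}},
\]
where $\{A_{i,j}\}_j$ is a partition of $\Omega$ in $\F_{t_i}$ and the $L^p$ error is at most $\varepsilon/2$. Since $\Y$ is a vector space, we write $\Xi_i=\sum_j \1_{A_{i,j}}\,y_{i,j}$ as a genuine linear combination; this is legitimate because exactly one indicator equals $1$ at each $\omega$.

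The next step replaces each indicator by a smooth function of finitely many observations. By the Remark following Lemma~\ref{lem:smooth-Xn}, every event of $\F_{t_i}$ agrees a.s.\ with an event of $\sigma(U_{t_i}, U_r: r\in\mathbb Q\cap[0,t_i])$. Hence $\1_{A_{i,j}}$ is approximated in $L^q(\mathbb P)$, for any chosen $q$, by $g(U_{s_1},\dots,U_{s_m})$ with $g\in C_c^\infty$, $0\le g\le 1$, and $s_k\le t_i$.

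The form \eqref{eq:smooth-process} requires the arguments to be exactly $U_{t_0},\dots,U_{t_i}$. To arrange this, we refine the partition so that it contains every observation time used. Refining does not change $\widehat X$ as a process: on each new subinterval $(t'_k,t'_{k+1}]\subseteq(t_i,t_{i+1}]$ the coefficient $\Xi_i$ is $\F_{t_i}\subseteq\F_{t'_k}$-measurable. A smooth function of a subset of $U_{t'_0},\dots,U_{t'_k}$ is still a $C^\infty$ function on $\R^{k+1}$, though it is not compactly supported in the unused variables. We fix this by multiplying by a cutoff $\chi(x)$ in those variables, with $0\le\chi\le1$ and $\chi=1$ on a large ball. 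By tightness of the finitely many observations, this changes the function only on an event of small probability. Only finitely many approximations are needed, so the finite refinement is consistent.

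The main obstacle is controlling the error in $d$ for a general metric on a metrizable topological vector space, where $d$ need not be homogeneous. We handle it as follows. The sum $\sum_j f_{i,j}y_{i,j}$ equals $\Xi_i$ on the event where every $f_{i,j}$ equals $\1_{A_{i,j}}$. Off that event, $(f_{i,j})_j$ ranges over the compact set $[0,1]^N$. The map $[0,1]^N\ni a\mapsto\sum_j a_jy_{i,j}$ is continuous, so its image is compact. Consequently $d\bigl(\sum_j f_{i,j}y_{i,j},\Xi_i\bigr)\le D_i$ for a finite constant $D_i$. It therefore suffices that $\mathbb P(\text{some }f_{i,j}\ne\1_{A_{i,j}})$ be small. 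We cannot get exact equality from $L^p$ closeness, so we instead use continuity of $a\mapsto\sum_j a_jy_{i,j}$ at the vertex $e_{j_0}$. Given $\eta>0$, there is $\delta>0$ such that $\max_j|f_{i,j}-\1_{A_{i,j}}|<\delta$ forces distance below $\eta$. We then split the error integral into the event where the maximum is below $\delta$ (error at most $\eta$) and its complement, which has Chebyshev-small probability (error at most $D_i$ times that probability raised to $1/p$). The $t=0$ term is treated the same way using functions of $U_0$ alone.

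Finally, for the smoothed-in-time version, we replace $\1_{(t_i,t_{i+1}]}$ by $\varphi_i\in C_c^\infty((t_i,t_{i+1}))$ with $0\le\varphi_i\le1$ and $\|\varphi_i-\1_{(t_i,t_{i+1}]}\|_{L^p}$ small, and drop the term at $\{0\}$, which is $\lambda$-null. The pointwise error is estimated by the same compactness argument, now over $(a,b)\mapsto\sum_j b\,a_jy_{i,j}$ with $(a,b)\in[0,1]^{N+1}$. The resulting process is continuous in $t$ and adapted, since $\varphi_i(t)\neq0$ only when $t>t_i$. Its coefficients are measurable with respect to $\F_{t_i}$, so it is progressive.
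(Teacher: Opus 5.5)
Your proposal is correct and follows essentially the same route as the paper. Both arguments use the elementary approximation of Theorem~\ref{thm:banach-elem-approx}, scalar approximation of the indicators via Lemma~\ref{lem:smooth-Xn}, refinement of the partition with cutoffs in the unused coordinates, and compactness and uniform continuity of $a\mapsto\sum_j a_jy_{i,j}$ on $[0,1]^N$ (with the extra time weight for the smoothed version) to turn scalar errors into metric $L^p$ errors; your Chebyshev splitting is simply an explicit form of the paper's ``convergence in probability plus boundedness on a compact set'' step.
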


% {\color{blue}
% The generation hypothesis on $\mathcal F_t$ is preserved by completion with
% null events, but must be checked separately for the usual right-continuous
% augmentation. The approximation is in $L^p(\mathbb P\otimes\lambda)$; it
% does not assert uniform-in-time convergence or convergence at the endpoints.
% }

\begin{proof}
  By Theorem~\ref{thm:banach-elem-approx} we may approximate $X$ in
  $L^p(\Omega \times [0,T];\Y)$ with error at most $\varepsilon/3$ by a
  process of the form
  \begin{equation}\label{eq:indicator}
    \widehat{X}(\omega,t)
    =\sum_{j=1}^{N}\sum_{i=0}^{m-1} \1_{I_{i}}(t)\,
    \1_{A_{i,j}}(\omega)\,y_{i,j},
  \end{equation}
  where $I_i=(s_i,s_{i+1}]$, $0=s_0<\cdots<s_m=T$,
  $A_{i,j}\in\F_{s_i}$, and $y_{i,j}\in\Y$. We omit the value at $t=0$,
  which does not affect the error. By the hypothesis on $\F_{s_i}$ and the
  countable generation of observations on $[0,s_i]$ noted above,
  Lemma~\ref{lem:smooth-Xn} approximates each $\1_{A_{i,j}}$
  arbitrarily closely in scalar $L^p$ by
  \[
    h_{i,j}(U_{q_{i,j,1}},\dots,U_{q_{i,j,k_{i,j}}}),
    \qquad 0\le q_{i,j,1}<\cdots<q_{i,j,k_{i,j}}\le s_i,
  \]
  with $h_{i,j}\in C_c^\infty(\R^{k_{i,j}})$ and $0\le h_{i,j}\le1$.
\\[4pt]
  To justify the corresponding metric $L^p$ approximation, define
  \[
    L_i(a_1,\dots,a_N):=\sum_{j=1}^N a_jy_{i,j},
    \qquad (a_1,\dots,a_N)\in[0,1]^N.
  \]
  Each $L_i$ is uniformly continuous on this cube, and
  $K:=\bigcup_{i=0}^{m-1}L_i([0,1]^N)$ is compact and has finite
  $d$-diameter. Thus scalar convergence of the coefficients in probability
  implies convergence of their images under $L_i$ in probability, while
  boundedness of the metric errors on $K$ upgrades this to $L^p$
  convergence. Consequently, substituting the functions $h_{i,j}$ into
  \eqref{eq:indicator} gives arbitrarily small metric $L^p$ error.
\\[4pt]
  Refine the partition by including all observation times $q_{i,j,\ell}$,
  and write it as $0=t_0<\cdots<t_n=T$. On a refined interval
  $(t_r,t_{r+1}]\subset I_i$, all coordinates used by $h_{i,j}$ occur among
  $U_{t_0},\dots,U_{t_r}$. To obtain compact support in all $r+1$
  coordinates, multiply $h_{i,j}$, regarded as a function of its selected
  coordinates, by a smooth cutoff in each unused coordinate. Choose these
  cutoffs between $0$ and $1$ and equal to $1$ on $[-R,R]$. The resulting
  $f_{r,j}$ belongs to $C_c^\infty(\R^{r+1})$ and satisfies
  $0\le f_{r,j}\le1$. All these replacements agree with the original
  cylinder coefficients on
  \[
    \left\{\max_{0\le r\le n}|U_{t_r}|\le R\right\},
  \]
  whose probability tends to $1$ as $R\to\infty$. Since the process values
  remain in $K$, the additional metric $L^p$ error tends to zero. Choose
  the scalar approximations and then $R$ so that their combined error is
  at most $\varepsilon/3$. Reindex the values $y_{i,j}$ on the refined
  partition and define the value at $0$ as in \eqref{eq:smooth-process}.
  This gives the required form and error at most $2\varepsilon/3$.
\\[4pt]
  Finally, omit the term at $0$. For each interval $(t_i,t_{i+1})$, convolve
  $\1_{[t_i+\delta,t_{i+1}-\delta]}$ with a nonnegative, unit-mass smooth
  mollifier supported in $(-\delta/2,\delta/2)$, for sufficiently small
  $\delta>0$. The resulting $\varphi_i\in C_c^\infty(\R)$ satisfy
  $0\le\varphi_i\le1$ and $\operatorname{supp}(\varphi_i)\subset(t_i,t_{i+1})$,
  and converge to $\1_{(t_i,t_{i+1}]}$ in $L^p([0,T])$ as $\delta\downarrow0$
  by dominated convergence. Multiplying
  the coefficient vectors in $[0,1]^N$ by the resulting $\varphi_i(t)$
  keeps them in the same cube. Uniform continuity of the maps $L_i$ and
  boundedness on $K$, as above, show that the additional metric $L^p$
  error can be made at most $\varepsilon/3$. Each coefficient depends
  only on observations up to $t_i$, and $\varphi_i$ is supported strictly
  after $t_i$, so the smoothed process remains progressive. The triangle
  inequality completes the proof.
\end{proof}

\section{Applications in non-Markovian control with random coefficients, path-dependence, and state and control distributions}
\label{sect:nonmarkovian-control}

 Throughout this section, we assume the continuum hypothesis (CH).
The preceding approximations can be used to approximate an entire control
problem, including its optimal value. We give a formulation allowing random
coefficients, dependence on the observed state history, and dependence on both
the state and control distributions. Related simplified coefficient 
approximations occur
in the stochastic HJB analysis of \cite[Lemma~5.5]{Qiu2018} and in the
jump setting of \cite{liang2026viscosity}. 
Here we establish the required
 approximations for Hilbert-space-valued jump processes, with
c\`adl\`ag state histories, possibly unbounded coefficients, and a possibly
noncompact control space.

Let $(\Omega,\F,(\F_t)_{0\le t\le T},\mathbb P)$ satisfy the usual
conditions. Let $H$ and $H_0$ be real separable Hilbert spaces, possibly
infinite dimensional.  
The stochastic basis carries a cylindrical
Brownian motion $W$ on $H_0$ and an independent
Poisson random measure $N(dt,de)$ on a standard Borel mark space
$\mathsf E$, with compensator $dt\,\vartheta(de)$, where $\vartheta$ is
$\sigma$-finite. Write $\widetilde N=N-dt\,\vartheta$.
Let $(A,\rho)$ be a nonempty Polish control space
 with a fixed compatible complete metric $\rho$, fix $a_*\in A$, and set
\[
 \mathsf D=D([0,T];H),\qquad
 \mathsf Z=H\times\mathsf D\times\mathcal P_2(H)
             \times A\times\mathcal P_2(A).
\]
For measurability, equip $\mathsf D$ with the Borel $\sigma$-algebra of
the Skorokhod $J_1$ topology. This is a Polish path space, and its Borel
$\sigma$-algebra is generated by coordinate evaluations; see
\cite[Theorem~1.7.2]{Kolokoltsov2011}.
Equip the probability-measure factors with their $W_2$-Borel
$\sigma$-algebras. Denote by $\mathcal B_{\mathsf Z}$
the resulting product $\sigma$-algebra. For estimates, however, use
\begin{equation}\label{eq:control-cadlag-metric}
 \begin{split}
 d_{\mathsf Z}(z,z')={}&\|x-x'\|_H+\|\phi-\phi'\|_\infty
       +W_2(\mu,\mu')+\rho(a,a')+W_2(\nu,\nu'),\\
 &z=(x,\phi,\mu,a,\nu),\qquad z'=(x',\phi',\mu',a',\nu').
 \end{split}
\end{equation}
Note that the supremum metric on $\mathsf D$ is generally
nonseparable, e.g., for any unit vector $v\in H$, the paths
$v\1_{[r,T]}$, $0<r<T$, have pairwise distance one.
 Nevertheless, the map
$(\phi,\psi)\mapsto\|\phi-\psi\|_\infty$ is jointly measurable for the
product of the Skorokhod Borel $\sigma$-algebras.
% , since
% \[
%  \|\phi-\psi\|_\infty
%  =\sup_{q\in(\mathbb Q\cap[0,T))\cup\{T\}}
%        \|\phi(q)-\psi(q)\|_H.
% \]
% The terminal time $T$ is included to capture a possible terminal jump.

Use the base point $z_*=(0,0,\delta_0,a_*,\delta_{a_*})$.
In addition to $Q_t(x,\phi,\mu,a,\nu)=(x,\phi_{\cdot\wedge t},\mu,a,\nu)$,
define the pre-jump stopping map by
\[
 \phi^{t-}(s)=
 \begin{cases}
  \phi(s),&s<t,\\
  \phi(t-),&s\ge t,
 \end{cases}
 \qquad Q_t^-(x,\phi,\mu,a,\nu)=(x,\phi^{t-},\mu,a,\nu),
\]
where $\phi^{0-}$ is the constant path $\phi(0)$.
Both stopping maps are contractions for $d_{\mathsf Z}$ and fixed $z_*$.
 The maps $(t,z)\mapsto Q_tz$ and $(t,z)\mapsto Q_t^-z$ are
also jointly measurable for the stated product $\sigma$-algebras.
For terminal costs use $\mathsf Z_T=H\times\mathsf D\times\mathcal P_2(H)$,
with the corresponding product $\sigma$-algebra, sum metric (as in \eqref{eq:control-cadlag-metric}), and base point
$(0,0,\delta_0)$.

 To allow linear growth while controlling Lipschitz constants, for a Banach target $B$ define
\begin{equation}\label{eq:control-weighted-norm}
 \|h\|_*
 :=\sup_{z\in\mathsf Z}\frac{|h(z)|_B}{1+d_{\mathsf Z}(z,z_*)},
 \qquad
 \operatorname{Lip}(h)
 :=\sup_{z\ne z'}\frac{|h(z)-h(z')|_B}{d_{\mathsf Z}(z,z')}.
\end{equation}
The quantity $\|h\|_*$ is a weighted supremum norm on the space of
$\mathcal B_{\mathsf Z}$-measurable, globally $d_{\mathsf Z}$-Lipschitz maps
$\mathsf Z\to B$, whereas $\operatorname{Lip}(h)$ denotes the Lipschitz
seminorm. Boundedness of $h$ is not required. In particular,
$|h(z)|_B\le\|h\|_*(1+d_{\mathsf Z}(z,z_*))$.
Denote this normed function space by
$\mathscr L_*(\mathsf Z;B)$ and equip it with the Borel $\sigma$-algebra
induced by $\|\cdot\|_*$.
The analogous notation will be used on $\mathsf Z_T$.
We impose the following assumptions.
\begin{enumerate}[(i)]
 \item The initial state $\xi$ is $\F_0$-measurable with
 $\E\|\xi\|_H^4<\infty$.
 The admissible set $\mathcal U$ is a fixed nonempty
 class of predictable $A$-valued controls satisfying for some $K\in(0,\infty)$,
 \begin{equation}\label{eq:control-moment-budget}
  \sup_{\alpha\in\mathcal U}\E\int_0^T\rho(\alpha_t,a_*)^4\,dt
  \le K.
 \end{equation}
 For bounded $A$, this allows all predictable controls.
 \item Set
 \[
  \mathsf J=L^2(\mathsf E,\vartheta;H)\cap L^4(\mathsf E,\vartheta;H),
  \qquad \|k\|_{\mathsf J}=\|k\|_{L^2(\vartheta;H)}+\|k\|_{L^4(\vartheta;H)}.
 \]
 The coefficient field $F=(b,\sigma,\gamma,f)$ takes values in the
 separable Banach space
 $B=H\times\mathcal L_2(H_0,H)\times\mathsf J\times\R$, with the sum norm;
 $\mathcal L_2(H_0,H)$ denotes the Hilbert--Schmidt operators.
 Its sections are $\mathcal B_{\mathsf Z}$-measurable and satisfy
 $F(\omega,t,z)=F(\omega,t,Q_tz)$ and
 $\operatorname{Lip}(F(\omega,t,\cdot))\le L$, outside a common
 $\mathbb P\otimes dt$-null set in the predictable $\sigma$-algebra
 $\mathcal P$.
 The map $(\omega,t)\mapsto F(\omega,t,\cdot)$ is
 $\mathcal P/\mathcal B(\mathscr L_*(\mathsf Z;E))$-measurable,
 where the target Borel $\sigma$-algebra is induced by
 \eqref{eq:control-weighted-norm}, and
 \[
  \E\int_0^T\|F(\omega,t,\cdot)\|_*^4\,dt<\infty.
 \]
 %We use a measurable version, setting it equal to zero on exceptional sets.
 \item The terminal field $g(\omega,\cdot):\mathsf Z_T\to\R$ has
 measurable sections for the stated product $\sigma$-algebra, is
  $\F_T/\mathcal B(\mathscr L_*(\mathsf Z_T;\R))$-measurable
 in the analogous weighted norm, has Lipschitz constant
 at most $L$ almost surely, and satisfies $\E\|g\|_*^4<\infty$.
\end{enumerate}

The function-space measurability assumptions are substantive and are
distinct from measurability of the parameter sections.
Since $\mathsf Z$ and $\mathsf Z_T$ are standard Borel spaces and
$B$ is separable, the two coefficient function spaces have cardinality,
and hence density, at most $\mathfrak c$. Under CH,
Theorem~\ref{thm:second} therefore implies that $F$ and $g$ are strongly
measurable with respect to $\mathbb P\otimes dt$ and $\mathbb P$,
respectively.
Alternatively, assume pointwise predictable measurability and an
essentially separable range in $\|\cdot\|_*$. Choose a countable dense
family in the difference set of that range and, for each member, a
sequence of parameter tuples approaching its weighted supremum.
Their union is a countable set that determines the weighted norm of
every such difference, by density. The argument of
Theorem~\ref{thm:measurable-function-valued} then gives a strongly
measurable version. This uses a set of evaluations chosen for the
particular range, rather than a countable supremum-metric dense set
in $\mathsf D$. Pointwise measurability alone is insufficient.

For $\alpha\in\mathcal U$, consider the controlled path-dependent McKean--Vlasov equation
\begin{equation}\label{eq:nonmarkovian-controlled-sde}
 \begin{aligned}
  dX_t^\alpha
   &=b(\omega,t,Z_t^\alpha)\,dt
     +\sigma(\omega,t,Z_t^\alpha)\,dW_t
  +\int_{\mathsf E}\gamma(\omega,t,Z_t^\alpha,e)\,
                      \widetilde N(dt,de),\qquad X_0^\alpha=\xi,\\
  Z_t^\alpha
   &=\bigl(X_{t-}^\alpha,(X^\alpha)^{t-},
           \mu_t^\alpha,\alpha_t,\nu_t^\alpha\bigr),\\
  \mu_t^\alpha&=\mathcal L(X_{t-}^\alpha),\qquad
  \nu_t^\alpha=\mathcal L(\alpha_t).
 \end{aligned}
\end{equation}
Here $\mathcal L$ denotes the unconditional law under $\mathbb P$, and
$X_{0-}^\alpha=\xi$. All stochastic integrands use pre-jump states and
histories. In the drift and running cost, replacing these by the current
state and stopped history gives the same time integrals, since a
c\`adl\`ag path has only countably many jumps. The corresponding state
laws agree for Lebesgue-almost every time. The ordinary identity
$F=F\circ Q_t$ is not being identified with the stronger identity
$F=F\circ Q_t^-$ on arbitrary parameter paths.
The control law belongs to $\mathcal P_2(A)$ for almost every $t$;
at the deterministic exceptional times it may be defined as $\delta_{a_*}$.
Choose its Borel measurable version as a measure-valued time map.
The running cost $f$ has precisely the same arguments as $b$ and $\sigma$.
Define
\begin{equation}\label{eq:nonmarkovian-control-value}
 \begin{split}
 J(\alpha)&=\E\left[\int_0^T f(\omega,t,Z_t^\alpha)\,dt
       +g(\omega,X_T^\alpha,X^\alpha,\mathcal L(X_T^\alpha))\right],\\
 V&=\inf_{\alpha\in\mathcal U}J(\alpha).
 \end{split}
\end{equation}
Thus the drift, diffusion, jump amplitude, and running cost may depend
on all the prescribed random, state, history, law, and control variables,
with the predictable pre-jump convention for the stochastic integrals.
The diffusion may be controlled and degenerate. The intensity
$\vartheta(\mathsf E)$ may be infinite; the assumptions on $\mathsf J$
control the jump integrals. There is no Markov assumption on the
random environment or the state.

\begin{proposition}[Approximation of the control problem]
\label{prop:nonmarkovian-control-stability}
Under assumptions (i)--(iii), equation
\eqref{eq:nonmarkovian-controlled-sde} has a unique strong solution for
each $\alpha\in\mathcal U$, with c\`adl\`ag $H$-valued paths. There exist
elementary function-valued fields
$\bar F^n=(\bar b^n,\bar\sigma^n,\bar\gamma^n,\bar f^n)$ and simple terminal fields
$g^n$, whose deterministic sections all have Lipschitz constant at most $L$,
such that
\begin{equation}\label{eq:control-coefficient-error}
 \eta_n:=\left(\E\int_0^T\|\bar F^n-F\|_*^4\,dt\right)^{1/4}
             +\left(\E\|g^n-g\|_*^4\right)^{1/4}\longrightarrow0.
\end{equation}
Set $F^n(\omega,t,z)=\bar F^n(\omega,t,Q_t^-z)$ and define
$X^{n,\alpha},J_n(\alpha),V_n$ by replacing $(F,g)$ with $(F^n,g^n)$
in \eqref{eq:nonmarkovian-controlled-sde}--\eqref{eq:nonmarkovian-control-value}.
In the approximating equation, the state law is
$\mu_t^{n,\alpha}=\mathcal L(X_{t-}^{n,\alpha})$, and the terminal
cost uses $\mathcal L(X_T^{n,\alpha})$.
There is a constant $C$, independent of $n$ and $\alpha$, such that
\begin{equation}\label{eq:control-stability-bound}
 \sup_{\alpha\in\mathcal U}
   \left(\E\sup_{0\le t\le T}\|X_t^{n,\alpha}-X_t^\alpha\|_H^2\right)^{1/2}
 +\sup_{\alpha\in\mathcal U}|J_n(\alpha)-J(\alpha)|
 +|V_n-V|\le C\eta_n.
\end{equation}
Consequently, every $\varepsilon$-optimal control for $J_n$ is
$(\varepsilon+2C\eta_n)$-optimal for $J$.
\end{proposition}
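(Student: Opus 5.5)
The proof has three parts, taken in order: well-posedness, construction of the approximating fields, and the uniform stability estimate from which the value and near-optimality statements follow.

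\emph{Well-posedness.} For fixed $\alpha\in\mathcal U$ we run a Picard/contraction argument on the space of c\`adl\`ag adapted $H$-valued processes with $\E\sup_t\|Y_t\|_H^2<\infty$.

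The map is $Y\mapsto\xi+\int b\,dt+\int\sigma\,dW+\iint\gamma\,\widetilde N$, with coefficients evaluated at $Z_t=(Y_{t-},Y^{t-},\mathcal L(Y_{t-}),\alpha_t,\nu_t^\alpha)$.

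Three facts make this work:
\begin{itemize}
\item Because $F=F\circ Q_t$ and $Q_t^-$ is a contraction, $\operatorname{Lip}(F(\omega,t,\cdot))\le L$ with respect to $d_{\mathsf Z}$.
\item The law term obeys $W_2(\mathcal L(Y_{t-}),\mathcal L(Y'_{t-}))\le(\E\|Y_{t-}-Y'_{t-}\|^2)^{1/2}$.
\item The BDG inequality for the cylindrical Wiener integral and for the compensated Poisson integral (the $L^2(\vartheta)$ part of $\mathsf J$) gives $\E\sup_{s\le t}\|Y_s-Y'_s\|^2\le C\int_0^t\E\sup_{r\le s}\|Y_r-Y'_r\|^2\,ds$.
\end{itemize}
Gronwall and iteration then yield existence and uniqueness.

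Linear growth $|F(z)|\le\|F\|_*(1+d_{\mathsf Z}(z,z_*))$ is needed for a priori bounds, and here integrability is delicate: $\|F\|_*$ is only in $L^4(\mathbb P\otimes dt)$. The product $\|F\|_*\cdot\sup\|X\|$ is handled with Cauchy--Schwarz, which is why fourth moments of $\xi$, of $\alpha$ via \eqref{eq:control-moment-budget}, and of $\|F\|_*$ are assumed. I would derive a fourth-moment bound $\E\sup_t\|X_t^\alpha\|^4\le C_0$ uniform in $\alpha\in\mathcal U$, using the $L^4(\vartheta)$ part of $\mathsf J$ through Kunita's inequality. Because $\|F\|_*$ is random and merely integrable, a stochastic Gronwall argument or stopping times may be needed here.

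\emph{Construction of $\bar F^n,g^n$.} Under CH, the discussion preceding the proposition together with Theorem~\ref{thm:second} makes $F$ and $g$ strongly measurable into $\mathscr L_*$. I would apply Theorem~\ref{thm:banach-elem-approx} with $p=4$, in its predictable version (Remark~\ref{rem:elementary-indicator} produces predictable elementary indicators). For $g$, Lemma~\ref{lem:approx} gives simple fields.

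The approximations must preserve $\operatorname{Lip}\le L$. Since the set $\{\operatorname{Lip}\le L\}$ is closed in $\|\cdot\|_*$ and the fields lie in it almost surely, I would replace each deterministic value $y_{i,j}$ by a nearest-in-distance point of the essential range. Theorem~\ref{thm:pettis-metric} lets us choose the dense sequence inside the range, and the construction of Lemma~\ref{lem:approx} picks values from $\{y_n\}\cup\{y_0\}$. Taking $y_0=0$ keeps every value in the Lip-$L$ class. Composing with $Q_t^-$ preserves the Lipschitz bound and predictability.

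\emph{Stability estimate.} Let $\Delta=X^{n,\alpha}-X^\alpha$. The coefficient difference splits as
\[
F^n(Z^n)-F(Z)=[\bar F^n-F](Q_t^-Z^n)+[F(Q_t^-Z^n)-F(Z)].
\]
The key point is that $F(\omega,t,Q_t^-Z^n_t)=F(\omega,t,Z^n_t)$. This holds because $Z^n_t$ already uses the pre-jump history $(X^n)^{t-}$, and $Q_t$-invariance together with $Q_t((X^n)^{t-})=(X^n)^{t-}$ forces $Q_t^-$ to act trivially there. Consequently the second bracket is bounded by $L\,d_{\mathsf Z}$, and hence by $L(\sup_{s<t}\|\Delta_s\|+(\E\|\Delta_{t-}\|^2)^{1/2})$. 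The first bracket is bounded by $\|\bar F^n-F\|_*(1+d_{\mathsf Z}(Z^n_t,z_*))$.

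Squaring, taking expectations, and applying Cauchy--Schwarz with the uniform fourth moments gives a term $\le C\eta_n^2$. BDG and Gronwall then give $\E\sup_t\|\Delta_t\|^2\le C\eta_n^2$ uniformly in $\alpha$.

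For the costs, $|J_n(\alpha)-J(\alpha)|$ is bounded in the same way, since $f$ is a component of $F$: a Lipschitz term controlled by $\Delta$, plus $\|\bar f^n-f\|_*$ and $\|g^n-g\|_*$ terms controlled via Cauchy--Schwarz. Taking infima gives $|V_n-V|\le\sup_\alpha|J_n-J|$. Finally, $J(\alpha)\le J_n(\alpha)+C\eta_n\le V_n+\varepsilon+C\eta_n\le V+\varepsilon+2C\eta_n$.

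\emph{Main obstacle.} I expect two steps to be hardest. The first is the uniform moment bound in the presence of random, only-$L^4$ growth coefficients. The second is keeping the Lipschitz bound $L$ while approximating in $\|\cdot\|_*$. For the first, I would try localizing with $\tau_R=\inf\{t:\int_0^t\|F\|_*^4\ge R\}$ before passing to the limit.
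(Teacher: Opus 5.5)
Your construction of $\bar F^n,g^n$ matches the paper: strong measurability under CH, Theorem~\ref{thm:banach-elem-approx} with $p=4$, and values drawn from the closed separable range in the $L$-Lipschitz class together with the zero field. Your closing argument for $V_n$ and near-optimal controls also matches.

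The genuine gap is the step you flag yourself: the a priori fourth-moment bound. You use the multiplicative growth bound $|F(z)|_B\le\|F\|_*(1+d_{\mathsf Z}(z,z_*))$. With $\|F\|_*$ only in $L^4(\mathbb P\otimes dt)$, the resulting term $\E\int_0^t\|F\|_*^4(1+\sup_{r\le s}\|X_r\|_H)^4\,ds$ can only be controlled by higher moments of $X$, so the estimate does not close at any moment level. The same defect already prevents you from showing that the Picard map preserves $\E\sup_t\|Y_t\|_H^2<\infty$. Localizing at $\tau_R$ only gives $R$-dependent bounds. In fact no argument using just $\|F\|_*\in L^4$ can succeed. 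Take $b(\omega,t,z)=\beta(\omega)x$ with $\beta\ge0$ $\F_0$-measurable, $\E\beta^4<\infty$ and $\E e^{4T\beta}=\infty$, and $\xi$ a fixed unit vector. Then $\|b\|_*\le\beta$, yet $\E\|X_T\|_H^4=\infty$.

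What rescues the proposition is that the slope is deterministic. The bound $\operatorname{Lip}(F(\omega,t,\cdot))\le L$ gives the additive estimate $|F(\omega,t,z)|_B\le a_t+L\,d_{\mathsf Z}(z,z_*)$, where $a_t:=|F(\omega,t,z_*)|_B\le\|F(\omega,t,\cdot)\|_*$. So randomness enters only as an $L^4$ forcing term. The Brownian and jump maximal inequalities, Jensen, and $W_2(\mathcal L(X_{t-}),\delta_0)^4\le\E\|X_{t-}\|_H^4$ then give
\[
M_\alpha(t)\le C\Bigl[\E\|\xi\|_H^4+\E\int_0^t\bigl(a_s^4+\rho(\alpha_s,a_*)^4\bigr)\,ds+\int_0^tM_\alpha(s)\,ds\Bigr].
\]
Ordinary Gronwall together with \eqref{eq:control-moment-budget} gives the bound uniformly in $\alpha$; no stochastic Gronwall argument or stopping is needed.

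Second, your stability split evaluates $\bar F^n-F$ at the approximating state. Cauchy--Schwarz then needs $\sup_{n,\alpha}\E\int_0^T(1+d_{\mathsf Z}(Z_t^{n,\alpha},z_*))^4\,dt<\infty$, which you never establish. It does follow from the additive bound applied to $F^n$, using $Q_t^-z_*=z_*$, $\operatorname{Lip}\le L$, and $\sup_n\E\int_0^T\|\bar F^n\|_*^4\,dt<\infty$ (since $\eta_n$ is bounded).

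The paper avoids this by splitting as $[F^n(Z^{n,\alpha})-F^n(Z^\alpha)]+[F^n(Z^\alpha)-F(Z^\alpha)]$. The first bracket uses the common constant $L$ of $F^n$. The second is at most $e_n(1+d_{\mathsf Z}(Z_t^\alpha,z_*))$ because $Q_t^-Z_t^\alpha=Z_t^\alpha$. Hence only the original moment bounds enter. Both this identity and your $Q_t^-Z_t^{n,\alpha}=Z_t^{n,\alpha}$ follow from idempotence, $(\phi^{t-})^{t-}=\phi^{t-}$; $Q_t$-invariance of $F$ plays no role.

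Finally, you omit the predictability of $t\mapsto F(\omega,t,Z_t^\alpha)$, which the stochastic integrals require. This is not automatic, since the supremum metric on $\mathsf D$ is nonseparable. The paper derives it from the Skorokhod Borel structure, left-continuity of $t\mapsto\phi^{t-}$, and approximation of $F$ by simple fields in $\|\cdot\|_*$.
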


\begin{proof}
The measurability gives strong measurability as well as an essentially separable range for $F$ in
$\|\cdot\|_*$. Since predictable processes are progressive, apply
Theorem~\ref{thm:banach-elem-approx} with $p=4$
to a measurable version, using the closure of this separable range inside
the space of maps with Lipschitz constant at most $L$ as the target.
This target has countable density. Its sections remain
$\mathcal B_{\mathsf Z}$-measurable and $L$-Lipschitz: convergence in
the weighted norm implies pointwise convergence. Apply Lemma~\ref{lem:approx} similarly
to $g$. These applications give \eqref{eq:control-coefficient-error}
and preserve the Lipschitz bound by selecting the deterministic sections
from the corresponding targets. Set $F^-(\omega,t,z)=F(\omega,t,Q_t^-z)$.
Since $Q_t^-z_*=z_*$,
\[
 \|h\circ Q_t^-\|_*\le\|h\|_*,\qquad e_n:=\|\bar F^n-F\|_*,
\]
and
\[
 |F^n(\omega,t,z)-F^-(\omega,t,z)|_B
       \le e_n(\omega,t)(1+d_{\mathsf Z}(z,z_*)).
\]
For every c\`adl\`ag path $\phi$, the map $t\mapsto\phi^{t-}$ is
left-continuous in the supremum metric on $(0,T]$, with right limit
$\phi_{\cdot\wedge t}$. Coordinate measurability implies that
$(t,\phi)\mapsto\phi^{t-}$ is measurable for the Skorokhod Borel
structure. Consequently, for any adapted c\`adl\`ag process $X$,
$X^{t-}$ is an adapted left-continuous Skorokhod-space-valued process,
hence predictable. The law of $X_{t-}$ is a left-continuous
$\mathcal P_2(H)$-valued map when $\E\sup_t\|X_t\|_H^2<\infty$.
These facts make $Z^\alpha$ predictable.
Evaluation of predictable simple fields with measurable parameter
sections at $Z^\alpha$ is predictable. Approximation in $\|\cdot\|_*$
therefore proves the same assertion for $F$; it also applies directly
to $F^n$. The resulting $\mathsf J$-valued jump integrands admit jointly
predictable representatives in $(\omega,t,e)$, which we use in the
Poisson integrals. This argument does not require measurability of the
random path for the Borel $\sigma$-algebra of the supremum topology.

The usual Picard argument applies also with the distribution argument:
on the common probability space the coupling inequality is
\begin{equation}\label{eq:control-law-coupling}
 W_2\bigl(\mathcal L(Y_t),\mathcal L(Y'_t)\bigr)^2
       \le\E\|Y_t-Y'_t\|_H^2.
\end{equation}
Together with the Lipschitz bound in the stopped-path supremum metric,
this gives uniqueness and convergence of Picard iteration on successive
short time intervals, using also the same inequality at $t-$.
For completeness, the additional jump estimates, for predictable
$H$-valued integrands $k$, are
\begin{equation}\label{eq:control-jump-maximal}
 \begin{split}
 \E\sup_{t\le T}\left\|\int_0^t\int_{\mathsf E}k_s(e)\,
                        \widetilde N(ds,de)\right\|_H^2
 &\le C\,\E\int_0^T\|k_s\|_{L^2(\vartheta;H)}^2\,ds,\\
 \E\sup_{t\le T}\left\|\int_0^t\int_{\mathsf E}k_s(e)\,
                        \widetilde N(ds,de)\right\|_H^4
 &\le C\,\E\left(\int_0^T\|k_s\|_{L^2(\vartheta;H)}^2\,ds\right)^2
%  \\
%  &\quad
 +C\,\E\int_0^T\|k_s\|_{L^4(\vartheta;H)}^4\,ds .
 \end{split}
\end{equation}
See \cite[Theorem~3.2]{MarinelliRockner2013}. This explains the use of
both $L^2(\vartheta;H)$ and $L^4(\vartheta;H)$ in the coefficient target.
Set $a_t:=|F(\omega,t,z_*)|_B\le\|F(\omega,t,\cdot)\|_*$.
The uniform Lipschitz bound gives the additive growth estimate
\[
 |F(\omega,t,z)|_B\le a_t+L\,d_{\mathsf Z}(z,z_*).
\]
For $M_\alpha(t):=\E\sup_{r\le t}\|X_r^\alpha\|_H^4$, the preceding jump
estimates, the Hilbert-space Brownian maximal inequality, and Jensen's
inequality yield
\[
 \begin{aligned}
 M_\alpha(t)\le C_{L,T}\bigg[&\E\|\xi\|_H^4
 +\E\int_0^t\bigl(a_s^4+\rho(\alpha_s,a_*)^4\bigr)\,ds
 +\int_0^t M_\alpha(s)\,ds\bigg],
 \end{aligned}
\]
where $C_{L,T}$ is independent of $\alpha$. The moment bounds are first
established uniformly for the Picard iterates and passed to the solution
by Fatou's lemma. Gronwall's inequality and the control moment bound
\eqref{eq:control-moment-budget} therefore give
\begin{equation}\label{eq:control-fourth-moments}
 \sup_{\alpha\in\mathcal U}\E\sup_{t\le T}\|X_t^\alpha\|_H^4<\infty,
 \qquad
 \sup_{\alpha\in\mathcal U}\E\int_0^T(R_t^\alpha)^4\,dt<\infty,
 \quad 
 R_t^\alpha:=1+d_{\mathsf Z}(Z_t^\alpha,z_*).
\end{equation}
Indeed, the squares of the second moments of the state and control
laws are bounded by the corresponding fourth moments, by Jensen's
inequality. Assumption \eqref{eq:control-moment-budget} controls the
control terms. The same existence argument applies to each approximating
equation. Only the original moment bounds in
\eqref{eq:control-fourth-moments} are needed for the error estimate.

Evaluate the coefficient error at the original state and apply
H\"older's inequality on $\Omega\times[0,T]$. Since $Q_t^-Z_t^\alpha=Z_t^\alpha$,
$F^-(\omega,t,Z_t^\alpha)=F(\omega,t,Z_t^\alpha)$, so
\begin{equation}\label{eq:control-weighted-error}
 \sup_{\alpha\in\mathcal U}
 \E\int_0^T|F^n(\omega,t,Z_t^\alpha)-F(\omega,t,Z_t^\alpha)|_E^2\,dt
 \le\left(\E\int_0^T e_n^4\,dt\right)^{1/2}
       \sup_{\alpha\in\mathcal U}
          \left(\E\int_0^T(R_t^\alpha)^4\,dt\right)^{1/2}
 \le C\eta_n^2.
\end{equation}
For the remaining difference use the common Lipschitz constant of $F^n$
and \eqref{eq:control-law-coupling}. The control and its law are identical
in the two equations for each fixed $\alpha$. Writing
$u_n^\alpha(t)=\E\sup_{s\le t}\|X_s^{n,\alpha}-X_s^\alpha\|_H^2$,
the drift, Brownian, and jump estimates yield
\[
 u_n^\alpha(t)\le C\eta_n^2+C\int_0^t u_n^\alpha(s)\,ds.
\]
Gronwall's inequality proves the state estimate uniformly in $\alpha$.
The Lipschitz bounds for $f^n,g^n$,
\eqref{eq:control-weighted-error}, and its terminal counterpart prove
$\sup_\alpha|J_n(\alpha)-J(\alpha)|\le C\eta_n$.
Here the terminal weight is
$1+\|X_T^\alpha\|_H+\|X^\alpha\|_\infty+
W_2(\mathcal L(X_T^\alpha),\delta_0)$, whose fourth moment is uniformly bounded.
The costs and their infima are finite by the same estimates. Finally,
\[
 |\inf_{\alpha\in\mathcal U}J_n(\alpha)
       -\inf_{\alpha\in\mathcal U}J(\alpha)|
 \le\sup_{\alpha\in\mathcal U}|J_n(\alpha)-J(\alpha)|.
\]
If $J_n(\alpha_n)\le V_n+\varepsilon$, it follows that
$J(\alpha_n)\le V+\varepsilon+2C\eta_n$, as claimed.
\end{proof}

\begin{remark}[Smooth dependence on finitely many observations]
\label{rem:control-smooth-cylinders}
Suppose additionally that the filtration is generated, modulo null events
at every time, by a countable family of real-valued processes
$(U^k)_{k\ge1}$, each with left- or right-continuous paths.
This includes the generating process in
Theorem~\ref{thm:smooth-process-approx}, and separable-Hilbert-valued
generating processes through their coordinates in a countable orthonormal
basis. On each $[0,t]$, their observations at rational times together
with $0,t$ form a countable generating family, so the proof of
Lemma~\ref{lem:smooth-Xn} applies without change.
The fields $\bar F^n$ and $g^n$ can then be replaced by fields whose
randomness depends smoothly on finitely many of these scalar observations, with
the same convergence conclusion. One must preserve a uniform Lipschitz
constant when smoothing. On each elementary interval, let
$h_1,\ldots,h_N$ be the deterministic coefficient sections and approximate
their indicator weights by smooth cylinder functions
$f_1,\ldots,f_N\in[0,1]$ on a common finite set of past observations.
For $\delta>0$, use the weights
\[
 w_j=\frac{f_j}{\delta+\sum_{k=1}^N f_k},\qquad
 \sum_{j=1}^N w_j\le1.
\]
They are smooth and compactly supported when the $f_j$ are, and
$\sum_jw_jh_j$ is a convex combination of the $h_j$ and the zero field.
Its Lipschitz constant is at most $L$. As the scalar approximation
errors and then $\delta$ tend to zero, these coefficients converge to
the elementary field in $L^4$ of $\|\cdot\|_*$: for each finite
construction their norms are bounded by a deterministic constant, and
the weights converge in probability to the indicator partition.
The same construction applies to the terminal field, using observations
up to $T$. Time indicators may also be smoothed inside their respective
intervals, as in Theorem~\ref{thm:smooth-process-approx}, without increasing
the Lipschitz bound. Compose with $Q_t^-$ after these operations to
obtain coefficients evaluated on pre-jump histories. Their evaluation
along a controlled state is predictable. This gives smooth dependence on the observation
coordinates; it does not assert differentiability in the state-path or
measure variables, or time smoothness after the stopped-path composition.
\end{remark}

\begin{remark}[Conditional laws and joint state-control laws]
\label{rem:control-law-extensions}
The proposition also holds with
$\mu_t^\alpha=\mathcal L(X_{t-}^\alpha\mid\mathcal F_{t-}^0)$ and
$\nu_t^\alpha=\mathcal L(\alpha_t\mid\mathcal F_{t-}^0)$ for a fixed
common-noise subfiltration $(\mathcal F_t^0)\subseteq(\mathcal F_t)$,
provided regular conditional distributions with jointly predictable
versions are available throughout the construction. On a common
probability space, the conditional law of $(Y_t,Y'_t)$ is a coupling, so
\[
 \E W_2\bigl(\mathcal L(Y_t\mid\mathcal F_{t-}^0),
             \mathcal L(Y'_t\mid\mathcal F_{t-}^0)\bigr)^2
       \le\E\|Y_t-Y'_t\|_H^2.
\]
Conditional Jensen's inequality gives the fourth-moment estimates used
above, and the same proof applies, with
$\mathcal L(X_T^\alpha\mid\mathcal F_T^0)$ in the terminal cost.
At $t=0$, use $\mathcal F_0^0$ in place of $\mathcal F_{0-}^0$.
The common-noise filtration is kept
fixed when comparing controls and coefficient approximations.
One may also replace the two marginal-law arguments by the joint law
$\mathcal L(X_{t-}^\alpha,\alpha_t)$, or its conditional version, using
$W_2$ on $H\times A$ with the sum metric and imposing the analogous
field assumptions. Coupling the same control in both systems gives
\eqref{eq:control-law-coupling} for the unconditional joint laws
and the preceding conditional coupling estimate for their
conditional versions.
\end{remark}

\begin{remark}[Scope of the control approximation]
The fourth-moment assumptions above provide a convenient sufficient
condition for uniformity over a noncompact class of controls. The
coefficient approximation is in an integrated function-space norm;
it is not an essential-supremum approximation in $(\omega,t)$.
The conclusion concerns controlled states, costs, optimal values, and
transfer of approximately optimal controls. It does not by itself give
a dynamic programming principle or a viscosity characterization for
the path- and law-dependent problem. In particular, pasting controls
on an event can change their unconditional laws and hence coefficients
outside that event. The finite-dimensional stochastic HJB arguments of
\cite{Qiu2018,liang2026viscosity} therefore require a separate extension
to a suitable path/distribution state and an appropriate admissible
class. Finite observation approximations of the random environment
still retain the state-history and distribution arguments. Additional
path or measure differentiability requires further assumptions. The
present statement covers separable-Hilbert-valued SDEs with a fixed
reference Poisson compensator; arbitrary nonseparable Hilbert spaces,
unbounded drift generators in mild or variational SPDE formulations,
and more general controlled compensators require their own hypotheses
and stability arguments.
\end{remark}

The preceding construction may be viewed as a \textit{piecewise Markovian-type
coefficient approximation}. On each elementary interval $(t_i,t_{i+1}]$,
deterministic sections are selected by finitely many weights measurable
with respect to $\F_{t_i}$, with the history and distribution arguments retained as enlarged
state variables. This terminology describes the coefficient structure and
does not require the state process under every admissible open-loop control
to be Markov. A formulation with finitely many history coordinates requires
a separately justified reduction of the path dependence and sufficient
environmental variables to determine subsequent evolution; unrestricted
distribution dependence generally leaves the Bellman state infinite
dimensional even when $H=\R^d$. Related finite-observation and
state-dependent approximations are used in the Bellman constructions of
\cite[Lemma~5.5 and the proof of Theorem~5.6]{Qiu2018} and
\cite{TangZhang2015}, while \cite{Tan2014} gives a discrete-history Bellman
recursion and convergence of approximating values. For continuous
path- and distribution-dependent dynamics with deterministic coefficient
functions, \cite{CossoEtAl2023} establishes dynamic programming and the
viscosity property of the associated Master Bellman equation, including
control-law dependence and Hilbert-valued states; \cite{PhamWei2018}
develops Bellman equations and viscosity comparison for dependence on the
joint state-control law under a feedback formulation. The more recent
preprint \cite{ZhouTouziZhang2025} treats progressive random coefficients
and path-dependent process arguments, including common-noise mean-field
control, through a viscosity theory on process space. Complementary results
with jumps include the Bellman equation and smooth verification theorem for
state-law-dependent jump diffusions in \cite{GuoPhamWei2023}, and
stochastic HJB equations with random coefficients, classical verification,
and Sobolev well-posedness under additional restrictions in
\cite{MengEtAl2023}. These works treat different combinations of the
features considered here under their respective hypotheses. Applying their
Bellman theories to the approximating problems therefore requires a
suitable dynamic programming state and admissible control class beyond
the stability conclusion of
Proposition~\ref{prop:nonmarkovian-control-stability}.

{\footnotesize
% Redefine thebibliography to hook in a tighter item separation
\let\oldthebibliography\thebibliography
\renewcommand\thebibliography[1]{%
    \oldthebibliography{#1}%
    \setlength{\itemsep}{0pt}%  <-- Reduces space between items
    \setlength{\parskip}{0pt}%  <-- Reduces paragraph space
}
\bibliographystyle{siam}
\bibliography{references}
}

\end{document}